\newtheorem{theorem}{Theorem}
\newtheorem{corollary}{Corollary}
\newtheorem{lemma}{Lemma}
\newtheorem{proposition}{Proposition}
\newenvironment{proof}[1][Proof]{\noindent\textbf{#1.} }{\ \rule{0.5em}{0.5em}}
\begin{document}

\title{Higher--order recurrence relations, Sobolev--type inner products and matrix factorizations}
\author{{Carlos Hermoso$^{1}$\orcidlink{0000-0002-5556-1839}}, {Edmundo J. Huertas$^{1,\dag}$\orcidlink{0000-0001-6802-3303}}, {Alberto Lastra$^{1}$\orcidlink{0000-0002-4012-6471}}, {Francisco Marcell\'{a}n$^{2}$\orcidlink{0000-0003-4331-4475}}\\
\\
$^{1}$Departamento de F\'{i}sica y Matem\'{a}ticas, Universidad de Alcal\'{a}\\
Ctra. Madrid-Barcelona, Km. 33,600\\
28805 - Alcal\'{a} de Henares, Madrid, Spain\\
carlos.hermoso@uah.es, edmundo.huertas@uah.es, alberto.lastra@uah.es\\
\\
[10pt] $^{2}$Departamento de Matem\'{a}ticas, Universidad Carlos III de Madrid\\
Avda. de la Universidad, 30\\
28911 - Legan\'{e}s, Madrid, Spain\\
pacomarc@ing.uc3m.es}
\date{\emph{\today}}
\maketitle

\begin{abstract}
It is well known that Sobolev-type orthogonal polynomials with respect to measures supported on the real line satisfy higher-order recurrence relations and these can be expressed as a (2N+1)-banded symmetric semi-infinite matrix. In this paper we state the connection between these (2N+1)-banded matrices and the Jacobi matrices associated with the three-term recurrence relation satisfied by the standard sequence of orthonormal polynomials with respect to the 2-iterated Christoffel transformation of the measure.

\textit{Key words and phrases.} Orthogonal polynomials, Sobolev--type
orthogonal polynomials, Jacobi matrices, five diagonal matrices, Laguerre
polynomials.

\textrm{2010 AMS Subject Classification. Primary 33C45, 33C47. Secondary
42C05.}
\end{abstract}


\section{Introduction}

\label{Sec1-Introduction}



Given a vector of measures $(\mu _{0},\mu _{1},\cdots ,\mu _{m})$ such that $%
\mu _{k}$ is supported on a set $E_{k}$, $k=0,1,\cdots ,m,$ of the real
line, let consider the Sobolev inner product

\begin{equation*}
\langle f, g\rangle_{S}=\sum _{k=0}^{m}\int_{E_{k}} f^{(k)}(x) g^{(k)}(x)
d\mu_{k}(x).
\end{equation*}

Several examples of sequences of orthogonal polynomials with respect to the
above inner products have been studied in the literature (see \cite{MX15})
as a recent survey).

\begin{enumerate}
\item When $E_{k}$, $k=0,1,\cdots ,m,$ are infinite subsets of the real
line. (Continuous Sobolev)

\item When $E_{0}$ is an infinite subset of the real line and $E_{k}$, $%
k=1,\cdots ,m,$ are finite subsets (Sobolev type)

\item When $E_{m}$ is an infinite subset of the real line and $E_{k}$, $%
k=0,\cdots ,m-1,$ are finite subsets.
\end{enumerate}

In the above cases, the three term recurrence relation that every sequence
of orthogonal polynomials with respect to a measure supported on an infinite
subset of the real line does not hold, This is a direct consequence of the
fact that the multiplication operator by $x$ is not symmetric with respect
to any of the above mentioned situations.

\smallskip

In the Sobolev type case, you get a multiplication operator by a polynomial
intimately related with the support of the discrete measures. In \cite{K90}
an illustrative example when $d\mu _{0}=x^{\alpha }e^{-x}dx$, $\alpha >-1$, $%
x\in \lbrack 0,+\infty )$ and $d\mu _{k}(x)=M_{k}\delta (x)$, $M_{k}\geq 0$,
$k=1,2,\cdots ,m,$ has been studied. In general, there exists a symmetric
multiplication operator for a general Sobolev inner product if and only if
the measures $\mu _{1},\cdots ,\mu _{m}$ are discrete (see \cite{ELMMR95}).
On the other hand, in \cite{D93} the study of the general inner product such
that the multiplication operator by a polynomial is a symmetric operator
with respect to the inner product has been done. The representation of such
inner products is given as well as the associated inner product. Assuming
some extra conditions, you get a Sobolev-type inner product. Notice that
there is an intimate relation about these facts and higher order recurrence
relations that the sequences of orthonormal polynomials with respect to the
above general inner products satisfy. A connection with matrix orthogonal
polynomials has been stated in \cite{DW95}.

\smallskip

When you deal with the Sobolev type inner product, a lot of contributions
have emphasized on the algebraic properties of the corresponding sequences
of orthogonal polynomials in terms of the polynomials orthogonal with
respect to the measure $\mu _{0}.$ The case $m=1$ has been studied in \cite%
{AMRR92}, where representation formulas for the new family as well as the
study of he distribution of their zeros have been analyzed. The particular
case of Laguerre Sobolev type orthogonal polynomials has been introduced and
deeply analyzed in \cite{KM93}. Outer ratio asymptotics when the measure
belongs to the Nevai class and some extensions to a more general framework
of Sobolev type inner products have been analyzed in \cite{MV93} and \cite%
{LMV95}. For measures supported on unbounded intervals, asymptotic
properties of Sobolev type orthogonal polynomials have been studied for
Laguerre measures (see \cite{HPMQ-subm13}, \cite{MaZeFeHu11}) and, in a more
general framework, in \cite{MM06}.

\smallskip

The aim of our contribution is to analyze the higher order recurrence
relation that a sequence of Sobolev type orthonormal polynomials satisfies
when you consider $d\mu _{0}=d\mu +M\delta (x-c)$ and $d\mu _{1}=N\delta
(x-c),$ where $M,N$ are nonnegative real numbers. In a first step, we obtain
connection formulas between such Sobolev type orthonormal polynomials and
the standard ones associated with the measures $d\mu $ and $(x-c)^{2}d\mu ,$
respectively. A matrix analysis of the five diagonal symmetric matrix
associated with such a higher order recurrence relation is presented taking
into account the $QR$ factorization of the shifted symmetric Jacobi
associated with the orthonormal polynomials with respect to the measure $%
d\mu $. The shifted Jacobi matrix associated with $(x-c)^{2}d\mu $ is $RQ$
(see \cite{BI-JCAM92}, \cite{KG83}). Our approach is quite different and it
is based on the iteration of the Cholesky factorization of the symmetric
Jacobi matrices associated with $d\mu $ and $(x-c)d\mu $, respectively (see
\cite{MBFM04}, \cite{Ga02}).

\smallskip

These polynomial perturbations of measures are known in the literature as
Christoffel perturbations (see \cite{Ga04} and \cite{Zhe97}). They
constitute examples of linear spectral transformations. The set of linear
spectral transformations is generated by Christoffel and Geronimus
transformations (see \cite{Zhe97}). The connection with matrix analysis
appears in \cite{DM-NA14} and \cite{DGM-LAA14} in terms of an inverse
problem for bilinear forms. On the other hand, Christoffel transformations
of the above type are related to Gaussian rules as it is studied in \cite%
{Ga02}. For a more general framework about perturbations of bilinear forms
and Hessenberg matrices as representations of a polynomial multiplication
operator in terms of sequences of orthonormal polynomials associated with
such bilinear forms, see \cite{BM-LAA06}.

\smallskip

The structure of the manuscript is as follows. Section 2 contains the basic
background about polynomial sequences orthogonal with respect to a measure
supported on an infinite set of the real line. We will call them standard
orthogonal polynomial sequences. In Section 3 we present several connection
formulas between the sequences of standard orthonormal polynomials
associated with the measures $d\mu $ and $(x-c)^{2}d\mu $ and the
orthonormal polynomials with respect to a Sobolev-type inner product. We
give alternative proofs to those presented in \cite{Thesis-H12}. In Section
4, we deduce the coefficients of the three term recurrence relation for the
orthonormal polynomials associated with the measure $(x-c)^{2}d\mu .$ In
Section 5 we study the five term recurrence relation that orthonormal
polynomials with respect to the Sobolev -type inner product satisfy. Section
6 deals with the connection between the shifted Jacobi matrices associated
with the measures $d\mu $ and $(x-c)^{2}d\mu $ in terms of $QR$
factorizations. In a next step, taking into account the Cholesky
factorization of the symmetric five diagonal matrix associated with the
multiplication operator $(x-c)^{2}$ in terms of the Sobolev-type orthonormal
polynomials by commuting the factors we get the square of the shifted Jacobi
matrix associated with the measure $(x-c)^{2}d\mu .$ Finally, in Section 7
we show an illustrative example in the framework of Laguerre- Sobolev type
inner products when $c=-1.$ Notice that in the literature, the authors have
focused the interest in the case $c=0$ and the analysis of the corresponding
differential operator such that the above polynomials are their
eigenfunctions (see \cite{KKB98}, \cite{CLE19} and \cite{CLE21}).




\section{Preliminaries}

\label{Sec2-Preliminaries}



Let $\mu $ be a finite and positive Borel measure supported on an infinite
subset $E$ of the real line such that all the integrals%
\begin{equation*}
\mu _{n}=\int_{E}x^{n}d\mu (x),
\end{equation*}%
exist for $n=0,\,1,\,2,\ldots $. $\mu _{n}$ is said to be the \textit{moment
of order }$n$\textit{\ of the measure }$\mu $. The measure $\mu $ is said to
be absolutely continuous with respect to the Lebesgue measure if there
exists a non-negative function $\omega (x)$ such that $d\mu (x)=\omega (x)dx$%
.

In the sequel, let $\mathbb{P}$ denote the linear space of polynomials in
one real variable with real coefficients, and let $\{P_{n}(x)\}_{n\geq 0}$
be the sequence of polynomials in $\mathbb{P}$ with leading coefficient
equal to one (monic OPS, or MOPS in short), orthogonal with respect to the
inner product $\langle \cdot ,\cdot \rangle _{\mu }:\mathbb{P}\times \mathbb{%
P}\rightarrow \mathbb{R}$ associated with $\mu $%
\begin{equation}
\langle f,g\rangle _{\mu }=\int_{E}f(x)g(x)d\mu (x).  \label{S1-InnProd-mu}
\end{equation}%
It induces the norm $||f||_{\mu }^{2}=\langle f,f\rangle _{\mu }$. Under
these considerations, these polynomials satisfy the following three term
recurrence relation%
\begin{equation}
xP_{n}(x)=P_{n+1}(x)+\beta _{n}P_{n}(x)+\gamma _{n}P_{n-1}(x),\qquad n\geq 0,
\label{S1-3TRRmonic}
\end{equation}%
where for every\ $n\geq 1$, $\gamma _{n}$ is a positive real number and $%
\beta _{n}$, $n\geq 0$\ is a real number.

The $n$--th reproducing kernel for $\omega (x)$ is%
\begin{equation}
K_{n}(x,y)=\sum_{k=0}^{n}\frac{P_{k}(x)P_{k}(y)}{||P_{k}||_{\mu }^{2}}%
,\qquad n\geq 0.  \label{S1-Knxy-P}
\end{equation}%
Because of the Christoffel-Darboux formula, see \cite{Chi78}, it may also be
expressed as%
\begin{equation}
K_{n}(x,y)=\frac{1}{||P_{n}||_{\mu }^{2}}\frac{%
P_{n+1}(x)P_{n}(y)-P_{n}(x)P_{n+1}(y)}{x-y},n\geq 0.  \label{S1-Knxy-CD-P}
\end{equation}%
The confluent formula becomes%
\begin{equation*}
K_{n}(x,x)=\sum_{k=0}^{n}\frac{[P_{k}(x)]^{2}}{||P_{k}||_{\mu }^{2}}=\frac{%
P_{n+1}^{\prime }(x)P_{n}(x)-P_{n}^{\prime }(x)P_{n+1}(x)}{||P_{n}||_{\mu
}^{2}},n\geq 0.
\end{equation*}%
We introduce the following usual notation for the partial derivatives of the
$n$-th reproducing kernel $K_{n}(x,y)$%
\begin{equation*}
\frac{\partial ^{j+k}K_{n}(x,y)}{\partial x^{j}\partial y^{k}}%
=K_{n}^{(j,k)}(x,y),\quad 0\leq j,k\leq n.
\end{equation*}%
We will use the expression of the first $y$--derivative of (\ref{S1-Knxy-P})
evaluated at $y=c$%
\begin{equation*}
K_{n}^{(0,1)}(x,c)=\frac{1}{||P_{n}||_{\mu }^{2}}\times
\end{equation*}%
\begin{equation}
\left[ \frac{P_{n+1}(x)P_{n}(c)-P_{n}(x)P_{n+1}(c)}{(x-c)^{2}}+\frac{%
P_{n+1}(x)P_{n}^{\prime }(c)-P_{n}(x)P_{n+1}^{\prime }(c)}{x-c}\right] ,
\label{S1-Knxy01der-P}
\end{equation}%
and the following confluent formulas%
\begin{equation}
K_{n}^{(0,1)}(c,c)=\frac{1}{||P_{n}||_{\mu }^{2}}\left[ \frac{%
P_{n}(c)P_{n+1}^{\prime \prime }(c)-P_{n+1}(c)P_{n}^{\prime \prime }(c)}{2}%
\right] ,n\geq 0,  \label{K01ccconfl}
\end{equation}%
\begin{equation*}
K_{n-1}^{(1,1)}(c,c)=\frac{1}{||P_{n}||_{\mu }^{2}}\times
\end{equation*}%
\begin{equation*}
\left[ \frac{P_{n}(c)P_{n+1}^{\prime \prime \prime
}(c)-P_{n+1}(c)P_{n}^{\prime \prime \prime }(c)}{6}+\frac{P_{n}^{\prime
}(c)P_{n+1}^{\prime \prime }(c)-P_{n+1}^{\prime }(c)P_{n}^{\prime \prime }(c)%
}{2}\right] ,n\geq 0,
\end{equation*}%
whose proof can be found in \cite[Sec. 2.1.2]{Thesis-H12}.

\smallskip

We will denote by $\{p_{n}(x)\}_{n\geq 0}$ the orthonormal polynomial
sequence with respect to the measure $\mu $. Obviously,

\begin{equation*}
p_{n}(x)=\frac{P_{n}(x)}{||P_{n}||_{\mu }}=r_{n}x^{n}+\text{\textit{lower
degree terms}.}
\end{equation*}%
Notice that%
\begin{equation*}
r_{n}=\frac{1}{||P_{n}||_{\mu }}.
\end{equation*}%
Using orthonormal polynomials, the Christoffel-Darboux formula (\ref%
{S1-Knxy-CD-P}) reads%
\begin{equation}
K_{n}(x,y)=\sum_{k=0}^{n}p_{k}(x)p_{k}(y)=\frac{r_{n}}{r_{n+1}}\frac{%
p_{n+1}(x)p_{n}(y)-p_{n}(x)p_{n+1}(y)}{x-y}  \label{S1-CD-orthonormals}
\end{equation}%
and its confluent form is%
\begin{equation*}
K_{n}(x,x)=\sum_{k=0}^{n}[p_{k}(x)]^{2}=\frac{r_{n}}{r_{n+1}}\left(
p_{n+1}^{\prime }(x)p_{n}(x)-p_{n}^{\prime }(x)p_{n+1}(x)\right) .
\end{equation*}

Next we define the Christoffel canonical transformation of a measure $\mu $
(see \cite{MBFM04}, \cite{Yoon02} and \cite{Zhe97}). Let $\mu $ be a
positive Borel measure supported on $E\subseteq \mathbb{R}$, and assume $%
c\notin E$. Here and in the sequel, $\{P_{n}^{[k]}(x)\}_{n\geq 0}$ will
denote the MOPS with respect to the inner product%
\begin{equation}
\langle f,g\rangle _{\lbrack k]}=\int_{E}f(x)g(x)d\mu ^{\lbrack k]},\quad
d\mu ^{\lbrack k]}=(x-c)^{k}d\mu ,\quad k\geq 0,\quad c\notin E.
\label{S1-k-iter-OP}
\end{equation}%
$\{P_{n}^{[k]}(x)\}_{n\geq 0}$ is said to be the $k$-iterated Christoffel
MOPS with respect to the above \textbf{standard} inner product. If $k=1$ we
have the \textbf{Christoffel canonical} perturbation of $\mu $. It is well
known that, in such a case, $P_{n}(c)\neq 0$, and (see \cite[(7.3)]{Chi78})%
\begin{equation*}
P_{n}^{[1]}(x)=\frac{1}{(x-c)}\left[ P_{n+1}(x)-\frac{P_{n+1}(c)}{P_{n}(c)}%
P_{n}(x)\right] =\frac{\Vert P_{n}\Vert _{\mu }^{2}}{P_{n}(c)}K_{n}(x,c),
\end{equation*}%
are the monic polynomials orthogonal with respect to the modified measure $%
d\mu ^{\lbrack 1]}$. They are known in the literature as monic \textit{%
kernel polynomials}. If $k>1,$ then we have the $k$-iterated Christoffel
transformation of $d\mu $. In the sequel, we will denote%
\begin{equation*}
||P_{n}^{[k]}||_{[k]}^{2}=\int_{E}[P_{n}^{[k]}(x)]^{2}(x-c)^{k}d\mu
\end{equation*}%
and $x_{n,r}^{[k]},$ $r=1,2,...,n,$ will denote the zeros of $P_{n}^{[k]}(x)$
arranged in an increasing order. Since $P_{n}^{[2]}(x)$ are the polynomials
orthogonal with respect to (\ref{S1-k-iter-OP}) when $k=2$ we have%
\begin{equation*}
(x-c)^{2}P_{n}^{[2]}(x)=\frac{%
\begin{vmatrix}
P_{n+2}(x) & P_{n+1}(x) & P_{n}(x) \\
P_{n+2}(c) & P_{n+1}(c) & P_{n}(c) \\
P_{n+2}^{\prime }(c) & P_{n+1}^{\prime }(c) & P_{n}^{\prime }(c)%
\end{vmatrix}%
}{%
\begin{vmatrix}
P_{n+1}(c) & P_{n}(c) \\
P_{n+1}^{\prime }(c) & P_{n}^{\prime }(c)%
\end{vmatrix}%
},
\end{equation*}%
i.e.,%
\begin{equation}
(x-c)^{2}P_{n}^{[2]}(x)=P_{n+2}(x)-d_{n}P_{n+1}(x)+e_{n}P_{n}(x),
\label{LagKer[2]monic}
\end{equation}%
where%
\begin{eqnarray}
d_{n} &=&\frac{P_{n+2}(c)P_{n}^{\prime }(c)-P_{n+2}^{\prime }(c)P_{n}(c)}{%
P_{n+1}(c)P_{n}^{\prime }(c)-P_{n+1}^{\prime }(c)P_{n}(c)},  \notag \\
e_{n} &=&\frac{P_{n+2}(c)P_{n+1}^{\prime }(c)-P_{n+2}^{\prime }(c)P_{n+1}(c)%
}{P_{n+1}(c)P_{n}^{\prime }(c)-P_{n+1}^{\prime }(c)P_{n}(c)}
\label{LagKer[2]moniccoeff} \\
&=&\frac{||P_{n+1}||_{\mu }^{2}}{||P_{n}||_{\mu }^{2}}\frac{K_{n+1}(c,c)}{%
K_{n}(c,c)}=\frac{r_{n}^{2}}{r_{n+1}^{2}}\frac{K_{n+1}(c,c)}{K_{n}(c,c)}>0.
\notag
\end{eqnarray}%
Similar determinantal formulas can be obtained for $k>2$. For orthonormal
polynomials the above expression reads%
\begin{equation*}
(x-c)^{2}\frac{p_{n}^{[2]}(x)}{r_{n}^{[2]}}=\frac{p_{n+2}(x)}{r_{n+2}}-d_{n}%
\frac{p_{n+1}(x)}{r_{n+1}}+e_{n}\frac{p_{n}(x)}{r_{n}}
\end{equation*}%
or, equivalently.
\begin{equation}
(x-c)^{2}p_{n}^{[2]}(x)=\frac{r_{n}^{[2]}}{r_{n+2}}p_{n+2}(x)-d_{n}\frac{%
r_{n}^{[2]}}{r_{n+1}}p_{n+1}(x)+e_{n}\frac{r_{n}^{[2]}}{r_{n}}p_{n}(x).
\label{confop2}
\end{equation}%
Furthermore, from \cite[Theorem 2.5]{Szego75} we conclude that%
\begin{equation*}
||P_{n}^{[2]}||_{[2]}^{2}=-\frac{P_{n+1}^{[1]}(c)}{P_{n}^{[1]}(c)}%
||P_{n}^{[1]}||_{[1]}^{2}=\frac{P_{n+1}^{[1]}(c)}{P_{n}^{[1]}(c)}\frac{%
P_{n+1}(c)}{P_{n}(c)}||P_{n}||_{\mu }^{2}\,.
\end{equation*}

On the other hand, taking (\ref{LagKer[2]monic}) into account%
\begin{equation*}
e_{n}=\frac{\int_{E}(x-c)^{2}P_{n}^{[2]}(x)P_{n}(x)d\mu }{%
\int_{E}P_{n}^{2}(x)d\mu }=\frac{||P_{n}^{[2]}||_{[2]}^{2}}{||P_{n}||_{\mu
}^{2}}=\frac{||P_{n+1}||_{\mu }^{2}}{||P_{n}||_{\mu }^{2}}\frac{K_{n+1}(c,c)%
}{K_{n}(c,c)}
\end{equation*}%
which implies that%
\begin{equation}
r_{n}^{[2]}=r_{n+1}\left( \frac{K_{n}(c,c)}{K_{n+1}(c,c)}\right) ^{1/2}.
\label{r2rnKK}
\end{equation}%
Replacing in (\ref{confop2}), the orthonormal version of the connection
formula (\ref{LagKer[2]monic}) reads%
\begin{equation*}
(x-c)^{2}p_{n}^{[2]}(x)=
\end{equation*}%
\begin{eqnarray*}
&&\left( \frac{K_{n}(c,c)}{K_{n+1}(c,c)}\right) ^{1/2}\times \left( \frac{%
r_{n+1}}{r_{n+2}}p_{n+2}(x)-d_{n}p_{n+1}(x)+e_{n}\frac{r_{n+1}}{r_{n}}%
p_{n}(x)\right) \\
&=&\left( \frac{K_{n}(c,c)}{K_{n+1}(c,c)}\right) ^{1/2}\times \left( \frac{%
||P_{n+2}||_{\mu }}{||P_{n+1}||_{\mu }}p_{n+2}(x)-d_{n}p_{n+1}(x)+e_{n}\frac{%
||P_{n}||_{\mu }}{||P_{n+1}||_{\mu }}p_{n}(x)\right) .
\end{eqnarray*}

\smallskip

In this contribution we will focus our attention on following inner product (%
\textit{Sobolev type inner product}%
\begin{equation}
\langle f,g\rangle _{S}=\int_{E}f(x)g(x)d\mu +Mf(c)g(c)+Nf^{\prime
}(c)g^{\prime }(c),\quad f,\,g\in \mathbb{P},  \label{S1-DicrSob-Inn}
\end{equation}%
where $\mu $ is a positive Borel measure supported on $E=[a,b]\subseteq
\mathbb{R}$, $c\notin E,$ and $M$, $N\geq 0$. In general, $E$ can be a
bounded or unbounded interval of the real line. Let $\{S_{n}^{M,N}(x)\}_{n%
\geq 0}$ denote the monic orthogonal polynomial sequence (MOPS in short)
with respect to (\ref{S1-DicrSob-Inn}). These polynomials are known in the
literature as \textit{Sobolev-type} or \textit{discrete Sobolev} orthogonal
polynomials. It is worth to point out that many properties of the standard
orthogonal polynomials are lost when an inner product as (\ref%
{S1-DicrSob-Inn}) is considered.

\smallskip



\section{The 3TRR for the 2-iterated orthogonal polynomials}

\label{Sec3-3TRR4P2}



In order to obtain the corresponding symmetric Jacobi matrix, in this
section will find the coefficients of the three term recurrence relation
satisfied by the $2-$iterated orthonormal polynomials $\{p_{n}^{[2]}(x)\}_{n%
\geq 0}$. First, we deal with the monic orthogonal polynomials $%
\{P_{n}^{[2]}(x)\}_{n\geq 0}.$Taking into account it is a standard sequence
we will have%
\begin{equation*}
x\,P_{n}^{[2]}(x)=P_{n+1}^{[2]}(x)+\kappa _{n}P_{n}^{[2]}(x)+\tau
_{n}P_{n-1}^{[2]}(x),\quad n\geq 0,
\end{equation*}%
where%
\begin{equation*}
\kappa _{n}=\frac{\langle x\,P_{n}^{[2]}(x),P_{n}^{[2]}(x)\rangle _{\lbrack
2]}}{\langle P_{n}^{[2]}(x),P_{n}^{[2]}(x)\rangle _{\lbrack 2]}},\qquad \tau
_{n}=\frac{\langle x\,P_{n}^{[2]}(x),P_{n-1}^{[2]}(x)\rangle _{\lbrack 2]}}{%
\langle P_{n-1}^{[2]}(x),P_{n-1}^{[2]}(x)\rangle _{\lbrack 2]}}.
\end{equation*}%
In order to obtain the explicit expression of the above coefficients, we
first study the numerator in $\kappa _{n}$. Taking into account (\ref%
{S1-k-iter-OP}) and (\ref{LagKer[2]monic}) we have%
\begin{eqnarray}
\langle x\,P_{n}^{[2]}(x),P_{n}^{[2]}(x)\rangle _{\lbrack 2]} &=&\langle
x\,P_{n}^{[2]}(x),(x-c)^{2}P_{n}^{[2]}(x)\rangle  \notag \\
&=&\langle x\,P_{n}^{[2]}(x),P_{n+2}(x)\rangle -d_{n}\langle
x\,P_{n}^{[2]}(x),P_{n+1}(x)\rangle +e_{n}\langle
P_{n}^{[2]}(x),xP_{n}(x)\rangle  \notag \\
&=&-d_{n}||P_{n+1}||_{\mu }^{2}+e_{n}\langle P_{n}^{[2]}(x),xP_{n}(x)\rangle
.  \notag
\end{eqnarray}%
Next, applying (\ref{S1-3TRRmonic})%
\begin{eqnarray*}
\langle P_{n}^{[2]}(x),xP_{n}(x)\rangle &=&\langle
P_{n}^{[2]}(x),P_{n+1}(x)\rangle +\beta _{n}\langle
P_{n}^{[2]}(x),P_{n}(x)\rangle +\gamma _{n}\langle
P_{n}^{[2]}(x),P_{n-1}(x)\rangle \\
&=&\beta _{n}||P_{n}||_{\mu }^{2}+\gamma _{n}\langle
P_{n}^{[2]}(x),P_{n-1}(x)\rangle .
\end{eqnarray*}%
Taking into account (\ref{LagKer[2]monic})
\begin{equation*}
P_{n-1}(x)=\frac{1}{e_{n-1}}(x-c)^{2}P_{n-1}^{[2]}(x)-\frac{1}{e_{n-1}}%
P_{n+1}(x)+\frac{d_{n-1}}{e_{n-1}}P_{n}(x)
\end{equation*}%
we obtain%
\begin{eqnarray*}
\langle P_{n}^{[2]}(x),P_{n-1}(x)\rangle &=&\langle P_{n}^{[2]}(x),\frac{1}{%
e_{n-1}}(x-c)^{2}P_{n-1}^{[2]}(x)-\frac{1}{e_{n-1}}P_{n+1}(x)+\frac{d_{n-1}}{%
e_{n-1}}P_{n}(x)\rangle \\
&=&\frac{1}{e_{n-1}}\langle P_{n}^{[2]}(x),P_{n-1}^{[2]}(x)\rangle _{\lbrack
2]}-\frac{1}{e_{n-1}}\langle P_{n}^{[2]}(x),P_{n+1}(x)\rangle \\
&&+\frac{d_{n-1}}{e_{n-1}}\langle P_{n}^{[2]}(x),P_{n}(x)\rangle \\
&=&\frac{d_{n-1}}{e_{n-1}}||P_{n}||_{\mu }^{2}.
\end{eqnarray*}%
Thus%
\begin{equation*}
\langle x\,P_{n}^{[2]}(x),P_{n}^{[2]}(x)\rangle _{\lbrack 2]}=\left( \beta
_{n}+\gamma _{n}\frac{d_{n-1}}{e_{n-1}}\right) e_{n}||P_{n}||_{\mu
}^{2}-d_{n}||P_{n+1}||_{\mu }^{2}.
\end{equation*}%
Next, we study the denominator in the expression of $\kappa _{n}$. From (\ref%
{LagKer[2]monic}) we have%
\begin{eqnarray*}
\langle P_{n}^{[2]}(x),P_{n}^{[2]}(x)\rangle _{\lbrack 2]} &=&\langle
P_{n}^{[2]}(x),(x-c)^{2}P_{n}^{[2]}(x)\rangle \\
&=&\langle P_{n}^{[2]}(x),P_{n+2}(x)\rangle -d_{n}\langle
P_{n}^{[2]}(x),P_{n+1}(x)\rangle \\
&&+e_{n}\langle P_{n}^{[2]}(x),P_{n}(x)\rangle \\
&=&e_{n}||P_{n}||_{\mu }^{2}.
\end{eqnarray*}%
Hence%
\begin{eqnarray*}
\kappa _{n} &=&\frac{\left( \beta _{n}+\gamma _{n}\frac{d_{n-1}}{e_{n-1}}%
\right) e_{n}||P_{n}||_{\mu }^{2}-d_{n}||P_{n+1}||_{\mu }^{2}}{%
||P_{n}^{[2]}||_{[2]}^{2}} \\
&=&\left( \beta _{n}+\gamma _{n}\frac{d_{n-1}}{e_{n-1}}\right) e_{n}\left(
\frac{r_{n}^{[2]}}{r_{n}}\right) ^{2}-d_{n}\left( \frac{r_{n}^{[2]}}{r_{n+1}}%
\right) ^{2}, \\
\tau _{n} &=&e_{n}\frac{||P_{n}||_{\mu }^{2}}{||P_{n-1}^{[2]}||_{[2]}^{2}}%
=\left( \frac{r_{n-1}^{[2]}}{r_{n}}\right) ^{2}e_{n}>0,
\end{eqnarray*}%
where%
\begin{eqnarray*}
d_{n} &=&\frac{r_{n+1}}{r_{n+2}}\frac{p_{n+2}(c)}{p_{n+1}(c)}+\frac{r_{n}}{%
r_{n+1}}\frac{p_{n}(c)}{p_{n+1}(c)}\frac{K_{n+1}(c,c)}{K_{n}(c,c)}, \\
e_{n} &=&\frac{\Vert P_{n+1}\Vert _{\mu }^{2}}{\Vert P_{n}\Vert _{\mu }^{2}}%
\frac{K_{n+1}(c,c)}{K_{n}(c,c)}=\left( \frac{r_{n}}{r_{n+1}}\right) ^{2}%
\frac{K_{n+1}(c,c)}{K_{n}(c,c)}>0.
\end{eqnarray*}%
Hence, we have proved the following



\begin{proposition}
\label{PROP01}The monic sequence $\{P_{n}^{[2]}(x)\}_{n\geq 0}$ satisfies
the three term recurrence relation%
\begin{equation*}
x\,P_{n}^{[2]}(x)=P_{n+1}^{[2]}(x)+\kappa _{n}P_{n}^{[2]}(x)+\tau
_{n}P_{n-1}^{[2]}(x),\quad n\geq 0,
\end{equation*}%
with $P_{-1}^{[2]}(x)=0$, $P_{0}^{[2]}(x)=1$, and%
\begin{eqnarray*}
\kappa _{n} &=&\left( \beta _{n}+\gamma _{n}\frac{d_{n-1}}{e_{n-1}}\right)
e_{n}\left( \frac{r_{n}^{[2]}}{r_{n}}\right) ^{2}-d_{n}\left( \frac{%
r_{n}^{[2]}}{r_{n+1}}\right) ^{2}, \\
\tau _{n} &=&\left( \frac{r_{n-1}^{[2]}}{r_{n+1}}\right) ^{2}\frac{%
K_{n+1}(c,c)}{K_{n}(c,c)}>0,
\end{eqnarray*}%
where, taking into account the explicit expressions for $d_{n}$ and $e_{n}$
given in (\ref{LagKer[2]monic}), we also have%
\begin{eqnarray*}
d_{n} &=&\frac{r_{n+1}}{r_{n+2}}\frac{p_{n+2}(c)}{p_{n+1}(c)}+\frac{r_{n}}{%
r_{n+1}}\frac{p_{n}(c)}{p_{n+1}(c)}\frac{K_{n+1}(c,c)}{K_{n}(c,c)}, \\
e_{n} &=&\left( \frac{r_{n}}{r_{n+1}}\right) ^{2}\frac{K_{n+1}(c,c)}{%
K_{n}(c,c)}>0.
\end{eqnarray*}
\end{proposition}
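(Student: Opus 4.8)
The plan is to exploit the fact that $\{P_{n}^{[2]}(x)\}_{n\geq 0}$ is a \emph{standard} MOPS, namely orthogonal with respect to the positive measure $d\mu^{[2]}=(x-c)^{2}d\mu$, so the existence of a three term recurrence relation of the displayed form is guaranteed by the general theory recalled in Section~\ref{Sec2-Preliminaries}. The whole task then reduces to evaluating the two coefficients through the usual Fourier-coefficient expressions
\begin{equation*}
\kappa_{n}=\frac{\langle x\,P_{n}^{[2]},P_{n}^{[2]}\rangle_{[2]}}{\langle P_{n}^{[2]},P_{n}^{[2]}\rangle_{[2]}},\qquad
\tau_{n}=\frac{\langle x\,P_{n}^{[2]},P_{n-1}^{[2]}\rangle_{[2]}}{\langle P_{n-1}^{[2]},P_{n-1}^{[2]}\rangle_{[2]}},
\end{equation*}
so it is enough to compute a handful of inner products.

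The key device throughout is the connection formula (\ref{LagKer[2]monic}), which lets me trade the measure $d\mu^{[2]}$ for $d\mu$ by absorbing the factor $(x-c)^{2}$ into one argument, writing $\langle f,g\rangle_{[2]}=\langle f,(x-c)^{2}g\rangle$ and then replacing $(x-c)^{2}P_{n}^{[2]}$ by $P_{n+2}-d_{n}P_{n+1}+e_{n}P_{n}$. First I would compute the denominator $\langle P_{n}^{[2]},P_{n}^{[2]}\rangle_{[2]}=\langle P_{n}^{[2]},(x-c)^{2}P_{n}^{[2]}\rangle$; since $P_{n}^{[2]}$ has degree $n$ and is $\mu$-orthogonal to all lower degrees, only the term $e_{n}\langle P_{n}^{[2]},P_{n}\rangle=e_{n}\|P_{n}\|_{\mu}^{2}$ survives. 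For the numerator of $\kappa_{n}$ I would expand $\langle x\,P_{n}^{[2]},(x-c)^{2}P_{n}^{[2]}\rangle$ in the same way; the degree bookkeeping kills the top term against $P_{n+2}$, yields $-d_{n}\|P_{n+1}\|_{\mu}^{2}$ from the middle term, and leaves $e_{n}\langle P_{n}^{[2]},xP_{n}\rangle$.

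The one genuinely non-routine step is the evaluation of $\langle P_{n}^{[2]},xP_{n}\rangle$. After applying the base three term recurrence (\ref{S1-3TRRmonic}) to $xP_{n}$, orthogonality disposes of the $P_{n+1}$ and $P_{n}$ pieces up to $\beta_{n}\|P_{n}\|_{\mu}^{2}$, but the contribution $\gamma_{n}\langle P_{n}^{[2]},P_{n-1}\rangle$ does \emph{not} vanish, because $P_{n-1}$ is a lower-degree \emph{standard} polynomial while $P_{n}^{[2]}$ is orthogonal only with respect to the weighted inner product. The trick I would use is to invert (\ref{LagKer[2]monic}) at index $n-1$, solving for $P_{n-1}$ as a combination of $(x-c)^{2}P_{n-1}^{[2]}$, $P_{n+1}$ and $P_{n}$; pairing against $P_{n}^{[2]}$, the first summand becomes an $[2]$-inner product that vanishes by orthogonality, the $P_{n+1}$-term vanishes by degree, and only $(d_{n-1}/e_{n-1})\|P_{n}\|_{\mu}^{2}$ remains. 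This is where the structure of the Christoffel iteration really enters, and it is the main obstacle in the sense that every other evaluation is immediate from orthogonality.

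Finally I would assemble the pieces. Dividing the $\kappa_{n}$ numerator by $\langle P_{n}^{[2]},P_{n}^{[2]}\rangle_{[2]}=e_{n}\|P_{n}\|_{\mu}^{2}$ gives $\kappa_{n}$. For $\tau_{n}$, the symmetry of multiplication by $x$ under $\langle\cdot,\cdot\rangle_{[2]}$ collapses its numerator to $\langle P_{n}^{[2]},P_{n}^{[2]}\rangle_{[2]}$, already computed, so $\tau_{n}=e_{n}\|P_{n}\|_{\mu}^{2}/\|P_{n-1}^{[2]}\|_{[2]}^{2}$. To reach the stated closed forms I would invoke the normalization identity (\ref{r2rnKK}) together with the orthonormal rewriting of $d_{n}$ and $e_{n}$ recorded just before the statement, converting the ratios of $\mu$-norms into $(r_{n}^{[2]}/r_{n})^{2}$, $(r_{n}^{[2]}/r_{n+1})^{2}$ and $(r_{n-1}^{[2]}/r_{n+1})^{2}$; positivity of $\tau_{n}$ is then manifest from that of $e_{n}$ and of the kernels $K_{n}(c,c)$.
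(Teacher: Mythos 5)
Your proposal is correct and follows essentially the same route as the paper: the Fourier-coefficient formulas for $\kappa_n$ and $\tau_n$, the expansion of $(x-c)^2P_n^{[2]}$ via (\ref{LagKer[2]monic}) to reduce everything to $\mu$-inner products, and in particular the key step of inverting (\ref{LagKer[2]monic}) at index $n-1$ to evaluate $\gamma_n\langle P_n^{[2]},P_{n-1}\rangle=\gamma_n\frac{d_{n-1}}{e_{n-1}}\Vert P_n\Vert_\mu^2$ are exactly what the paper does. The final conversion to the stated closed forms via (\ref{r2rnKK}) also matches.
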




Observe that the orthonormal version of the above proposition is%
\begin{equation*}
p_{n+1}^{[2]}(x)=\left( x-\kappa _{n}\right) \frac{p_{n}^{[2]}(x)}{%
r_{n}^{[2]}/r_{n+1}^{[2]}}-\tau _{n}\frac{p_{n-1}^{[2]}(x)}{%
r_{n-1}^{[2]}/r_{n+1}^{[2]}},\quad n\geq 0,
\end{equation*}%
and, according to \cite[Th. 1.29, p.12-13]{Ga04},%
\begin{equation*}
p_{n+1}^{[2]}(x)=\left( x-\kappa _{n}\right) \frac{p_{n}^{[2]}(x)}{\sqrt{%
\tau _{n+1}}}-\tau _{n}\frac{p_{n-1}^{[2]}(x)}{\sqrt{\tau _{n+1}\tau _{n}}}%
,\quad n\geq 0,
\end{equation*}%
so we can conclude that
\begin{equation*}
\sqrt{\tau _{n+1}}=\frac{r_{n}^{[2]}}{r_{n+1}^{[2]}},\quad \sqrt{\tau
_{n+1}\tau _{n}}=\frac{r_{n-1}^{[2]}}{r_{n+1}^{[2]}}=\frac{r_{n}^{[2]}}{%
r_{n+1}^{[2]}}\frac{r_{n-1}^{[2]}}{r_{n}^{[2]}}.
\end{equation*}%
Therefore%
\begin{equation*}
\tau _{n}=\left( \frac{r_{n-1}^{[2]}}{r_{n+1}}\right) ^{2}\frac{K_{n+1}(c,c)%
}{K_{n}(c,c)}=\left( \frac{r_{n-1}^{[2]}}{r_{n}^{[2]}}\right) ^{2}.
\end{equation*}%
As a consequence,%
\begin{eqnarray}
\frac{K_{n+1}(c,c)}{K_{n}(c,c)} &=&\left( \frac{r_{n-1}^{[2]}}{r_{n}^{[2]}}%
\right) ^{2}\left( \frac{r_{n+1}}{r_{n-1}^{[2]}}\right) ^{2}=\left( \frac{%
r_{n+1}}{r_{n}^{[2]}}\right) ^{2},  \notag \\
e_{n} &=&\left( \frac{r_{n}}{r_{n}^{[2]}}\right) ^{2}>0,  \label{endnSimples}
\\
d_{n} &=&\frac{r_{n+1}}{r_{n+2}}\frac{p_{n+2}(c)}{p_{n+1}(c)}+\left( \frac{%
r_{n}}{r_{n}^{[2]}}\right) ^{2}\frac{r_{n+1}}{r_{n}}\frac{p_{n}(c)}{%
p_{n+1}(c)}.  \notag
\end{eqnarray}%
Replacing in $\kappa _{n}$ these alternative expressions for $e_{n}$\ and $%
d_{n}$\ we have%
\begin{equation*}
\kappa _{n}=\left( \beta _{n}+\gamma _{n}\frac{d_{n-1}}{e_{n-1}}\right)
e_{n}\left( \frac{r_{n}^{[2]}}{r_{n}}\right) ^{2}-d_{n}\left( \frac{%
r_{n}^{[2]}}{r_{n+1}}\right) ^{2},
\end{equation*}%
\begin{equation*}
\frac{d_{n-1}}{e_{n-1}}=\frac{\frac{r_{n}}{r_{n+1}}\frac{p_{n+1}(c)}{p_{n}(c)%
}+\left( \frac{r_{n-1}}{r_{n-1}^{[2]}}\right) ^{2}\frac{r_{n}}{r_{n-1}}\frac{%
p_{n-1}(c)}{p_{n}(c)}}{\left( \frac{r_{n-1}}{r_{n-1}^{[2]}}\right) ^{2}}%
=\left( \frac{r_{n-1}^{[2]}}{r_{n-1}}\frac{r_{n-1}^{[2]}}{r_{n-1}}\frac{r_{n}%
}{r_{n+1}}\frac{p_{n+1}(c)}{p_{n}(c)}+\frac{r_{n}}{r_{n-1}}\frac{p_{n-1}(c)}{%
p_{n}(c)}\right).
\end{equation*}%
Therefore%
\begin{eqnarray*}
\kappa _{n} &=&\beta _{n}+\gamma _{n}\left( \frac{r_{n-1}^{[2]}}{r_{n-1}}%
\frac{r_{n-1}^{[2]}}{r_{n-1}}\frac{r_{n}}{r_{n+1}}\frac{p_{n+1}(c)}{p_{n}(c)}%
+\frac{r_{n}}{r_{n-1}}\frac{p_{n-1}(c)}{p_{n}(c)}\right) \\
&&\qquad \qquad \qquad -\left( \frac{r_{n}^{[2]}}{r_{n+1}}\frac{r_{n}^{[2]}}{%
r_{n+2}}\frac{p_{n+2}(c)}{p_{n+1}(c)}+\frac{r_{n}}{r_{n+1}}\frac{p_{n}(c)}{%
p_{n+1}(c)}\right) .
\end{eqnarray*}

We have then proved the following



\begin{corollary}
\label{COROL01}The orthonormal polynomial sequence $\{p_{n}^{[2]}(x)\}_{n%
\geq 0}$ satisfies the three term recurrence relation%
\begin{equation}
\sqrt{\tau _{n+1}}p_{n+1}^{[2]}(x)=\left( x-\kappa _{n}\right)
p_{n}^{[2]}(x)-\sqrt{\tau _{n}}p_{n-1}^{[2]}(x),\quad n\geq 0,
\label{3TRRk2orthonormal}
\end{equation}%
with $p_{-1}^{[2]}(x)=0$, $p_{0}^{[2]}(x)=1/\sqrt{\tau _{0}}$, where%
\begin{eqnarray*}
\kappa _{n} &=&\left( \beta _{n}+\gamma _{n}\frac{d_{n-1}}{e_{n-1}}\right)
e_{n}\left( \frac{r_{n}^{[2]}}{r_{n}}\right) ^{2}-d_{n}\left( \frac{%
r_{n}^{[2]}}{r_{n+1}}\right) ^{2} \\
&=&\beta _{n}+\gamma _{n}\left( \left( \frac{r_{n-1}^{[2]}}{r_{n-1}}\right)
^{2}\frac{r_{n}}{r_{n+1}}\frac{p_{n+1}(c)}{p_{n}(c)}+\frac{r_{n}}{r_{n-1}}%
\frac{p_{n-1}(c)}{p_{n}(c)}\right) \\
&&\qquad \qquad \qquad -\left( \frac{r_{n}^{[2]}}{r_{n+1}}\frac{r_{n}^{[2]}}{%
r_{n+2}}\frac{p_{n+2}(c)}{p_{n+1}(c)}+\frac{r_{n}}{r_{n+1}}\frac{p_{n}(c)}{%
p_{n+1}(c)}\right) , \\
\tau _{n} &=&\left( \frac{r_{n-1}^{[2]}}{r_{n}^{[2]}}\right) ^{2}>0,
\end{eqnarray*}%
where $e_{n}$, and $d_{n}$ are given in (\ref{endnSimples}).
\end{corollary}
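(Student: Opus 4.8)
The plan is to obtain the orthonormal recurrence as the normalized form of the monic one already established in Proposition~\ref{PROP01}, and then to collapse the coefficients using the single key identity coming from (\ref{r2rnKK}). Since the orthonormal and monic polynomials are related by $p_n^{[2]}(x)=r_n^{[2]}P_n^{[2]}(x)$, with $r_n^{[2]}=1/\|P_n^{[2]}\|_{[2]}$, I would first substitute $P_n^{[2]}=p_n^{[2]}/r_n^{[2]}$ into the monic recurrence of Proposition~\ref{PROP01} and clear denominators by multiplying through by $r_n^{[2]}$, producing
\begin{equation*}
x\,p_n^{[2]}(x)=\frac{r_n^{[2]}}{r_{n+1}^{[2]}}\,p_{n+1}^{[2]}(x)+\kappa_n\,p_n^{[2]}(x)+\tau_n\,\frac{r_n^{[2]}}{r_{n-1}^{[2]}}\,p_{n-1}^{[2]}(x).
\end{equation*}
Because $\{p_n^{[2]}\}_{n\ge0}$ is a \emph{standard} orthonormal sequence, its recurrence must be realized by a symmetric (tridiagonal) Jacobi matrix, so the sub- and super-diagonal entries coincide; this is exactly the normalization recorded in \cite[Th. 1.29, p.12-13]{Ga04}. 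Matching the two displays forces $r_n^{[2]}/r_{n+1}^{[2]}=\sqrt{\tau_{n+1}}$ and $\tau_n\,r_n^{[2]}/r_{n-1}^{[2]}=\sqrt{\tau_n}$, which together give the symmetric form (\ref{3TRRk2orthonormal}) and the identification $\tau_n=(r_{n-1}^{[2]}/r_n^{[2]})^2$. The initial data $p_{-1}^{[2]}=0$ and $p_0^{[2]}=1/\sqrt{\tau_0}$ follow from $p_0^{[2]}=r_0^{[2]}=1/\|P_0^{[2]}\|_{[2]}$ together with the convention $\tau_0=\|P_0^{[2]}\|_{[2]}^2$.

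Next I would reconcile the two expressions for $\tau_n$. Starting from the value $\tau_n=(r_{n-1}^{[2]}/r_{n+1})^2\,K_{n+1}(c,c)/K_n(c,c)$ given in Proposition~\ref{PROP01}, I would invoke (\ref{r2rnKK}), which upon squaring reads $(r_n^{[2]})^2=r_{n+1}^2\,K_n(c,c)/K_{n+1}(c,c)$, i.e. $K_{n+1}(c,c)/K_n(c,c)=(r_{n+1}/r_n^{[2]})^2$. Substituting this into the Proposition's $\tau_n$ immediately cancels the $r_{n+1}$ factors and yields $\tau_n=(r_{n-1}^{[2]}/r_n^{[2]})^2$, matching the value forced by symmetry in the previous step. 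The same identity simplifies $e_n=(r_n/r_{n+1})^2\,K_{n+1}(c,c)/K_n(c,c)$ to $e_n=(r_n/r_n^{[2]})^2$, and rewriting the determinantal $d_n$ from (\ref{LagKer[2]monic}) in orthonormal form gives the two-term expression in (\ref{endnSimples}).

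Finally, I would feed these simplified $e_n$ and $d_n$ back into the Proposition's formula for $\kappa_n$. The factor $e_n(r_n^{[2]}/r_n)^2=1$ collapses the leading term to $\beta_n+\gamma_n\,d_{n-1}/e_{n-1}$, while $d_n(r_n^{[2]}/r_{n+1})^2$ produces the last bracket; after expanding $d_{n-1}/e_{n-1}$ using the shifted versions of (\ref{endnSimples}) one obtains the closed expression for $\kappa_n$ stated in the corollary. The work is almost entirely algebraic consolidation of the computations preceding the statement, so the main obstacle is purely the bookkeeping of the normalization constants $r_n$ and $r_n^{[2]}$: one must track the index shifts carefully so that the two independent derivations of $\tau_n$ (from Jacobi symmetry and from the kernel ratio) agree, which serves as the internal consistency check validating the simplifications.
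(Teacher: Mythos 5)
Your proposal is correct and follows essentially the same route as the paper: normalize the monic recurrence of Proposition~\ref{PROP01}, compare with the symmetric orthonormal form from \cite[Th. 1.29]{Ga04} to identify $\sqrt{\tau_{n+1}}=r_{n}^{[2]}/r_{n+1}^{[2]}$, use the kernel-ratio identity $K_{n+1}(c,c)/K_{n}(c,c)=(r_{n+1}/r_{n}^{[2]})^{2}$ to simplify $\tau_{n}$, $e_{n}$, $d_{n}$, and substitute back into $\kappa_{n}$. The only cosmetic difference is that you invoke (\ref{r2rnKK}) directly as the source of that kernel-ratio identity and treat the agreement of the two $\tau_{n}$ expressions as a consistency check, whereas the paper presents the identity as a consequence of that agreement.
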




\section{Connection formulas}

\label{Sec4-ConnForm}



As we have seen in previous section, the connection formulas are the main
tool to study the analytical properties of new families of OPS, in terms of
other families of OPS with well-known analytical properties. Indeed, the
problem of finding such expressions is called \textit{the connection problem}%
, and it is of great importance in this context.

In this Section we present some results of \cite{Thesis-H12}, and which will
be useful later. We will give some alternative proofs of them. From now on,
let us denote by $\{s_{n}^{M,N}\}_{n\geq 0}$, $\{p_{n}\}_{n\geq 0}$ the
sequences of polynomials orthonormal with respect to (\ref{S1-DicrSob-Inn})
and (\ref{S1-InnProd-mu}), respectively. We will write%
\begin{eqnarray*}
s_{n}^{M,N}(x) &=&t_{n}x^{n}+\text{\textit{\ lower degree terms},\quad }t_{n}>0,
\\
p_{n}(x) &=&r_{n}x^{n}+\text{\textit{\ lower degree terms},\quad }r_{n}>0, \\
p_{n}^{[k]} (x) &=&r_{n}^{[k]}x^{n}+\text{\textit{\ lower degree terms},\quad }%
r_{n}^{[k]}>0.
\end{eqnarray*}

In the sequel the following notation will be useful. For every $k\in \mathbb{%
N}_{0}$, let us define $\mathbf{J}_{[k]}$ as the semi-infinite symmetric
Jacobi matrix associated with the measure $(x-c)^{k}d\mu $, verifying
\begin{equation*}
x\,\mathbf{\bar{p}}^{[k]}=\mathbf{J}_{[k]}\,\mathbf{\bar{p}}^{[k]},
\end{equation*}%
where $\mathbf{\bar{p}}^{[k]}$ stands for the semi-infinite column vector
with \textit{orthonormal} polynomial entries $\mathbf{\bar{p}}%
^{[k]}=[p_{0}^{[k]}(x),p_{1}^{[k]}(x),p_{2}^{[k]}(x),\ldots ]^{\intercal }$,
being $\{p^{[k]}(x)\}_{n\geq 0}$ the orthonormal polynomial sequence with
respect to the measure $(x-c)^{k}d\mu $ (\ref{S1-k-iter-OP}) . One has $%
\mathbf{\bar{p}}^{[0]}=\mathbf{\bar{p}}=[p_{0}(x),p_{1}(x),p_{2}(x),\ldots
]^{\intercal }$ being $\{p(x)\}_{n\geq 0} $ the orthonormal polynomial
sequence with respect to the standard measure $\mu $, and $\mathbf{J}_{[0]}=%
\mathbf{J}$ is the corresponding Jacobi matrix.

Next, we will present an expansion of the monic polynomials $S_{n}^{M,N}(x)$
in terms of polynomials $P_{n}(x)$ orthogonal with respect to $\mu $. When
necessary, we refer the reader to \cite[Th. 5.1]{Thesis-H12} for alternative
proofs to those presented here.



\begin{lemma}
\label{S2-LEMMA-01}%
\begin{equation}
S_{n}^{M,N}(x)=P_{n}(x)-M\,S_{n}^{M,N}(c)K_{n-1}(x,c)-N\,[S_{n}^{M,N}]^{%
\prime }(c)K_{n-1}^{(0,1)}(x,c)  \label{[Sec2]-ConnFormS}
\end{equation}%
where%
\begin{eqnarray}
S_{n}^{M,N}(c) &=&\frac{%
\begin{vmatrix}
P_{n}(c) & NK_{n-1}^{(0,1)}(c,c) \\
\lbrack P_{n}]^{\prime }(c) & 1+NK_{n-1}^{(1,1)}(c,c)%
\end{vmatrix}%
}{%
\begin{vmatrix}
1+MK_{n-1}(c,c) & NK_{n-1}^{(0,1)}(c,c) \\
MK_{n-1}^{(1,0)}(c,c) & 1+NK_{n-1}^{(1,1)}(c,c)%
\end{vmatrix}%
},  \label{[Sec2]-Sn-Det} \\
\lbrack S_{n}^{M,N}]^{\prime }(c) &=&\frac{%
\begin{vmatrix}
1+MK_{n-1}(c,c) & P_{n}(c) \\
MK_{n-1}^{(1,0)}(c,c) & [P_{n}]^{\prime }(c)%
\end{vmatrix}%
}{%
\begin{vmatrix}
1+MK_{n-1}(c,c) & NK_{n-1}^{(0,1)}(c,c) \\
MK_{n-1}^{(1,0)}(c,c) & 1+NK_{n-1}^{(1,1)}(c,c)%
\end{vmatrix}%
}.  \label{[Sec2]-Sprimn-Det}
\end{eqnarray}
\end{lemma}



\begin{proof}
We search for the expansion%
\begin{equation*}
S_{n}^{M,N}(x)=P_{n}(x)+\sum_{j=0}^{n-1}\varrho _{n,j}P_{j}(x),
\end{equation*}%
where%
\begin{equation*}
\varrho _{n,j}=\frac{\int_{E}S_{n}^{M,N}(x)P_{j}(x)d\mu }{||P_{j}||_{\mu
}^{2}}=-\frac{M\,S_{n}^{M,N}(c)P_{j}(c)}{||P_{j}||_{\mu }^{2}}-\frac{%
N\,[S_{n}^{M,N}]^{\prime }(c)[P_{n}]^{\prime }(c)}{||P_{j}||_{\mu }^{2}}.
\end{equation*}%
From these coefficients (\ref{[Sec2]-ConnFormS} follows). Next, having its
first derivative with respect to $x$, and taking $x=c$ we get%
\begin{eqnarray*}
P_{n}(c) &=&\left( 1+M\,K_{n-1}(c,c)\right)
S_{n}^{M,N}(c)+N\,K_{n-1}^{(0,1)}(c,c)[S_{n}^{M,N}]^{\prime }(c), \\
\lbrack P_{n}]^{\prime }(c) &=&M\,K_{n-1}^{(0,1)}(c,c)S_{n}^{M,N}(c)+\left(
1+N\,K_{n-1}^{(1,1)}(c,c)\right) [S_{n}^{M,N}]^{\prime }(c).
\end{eqnarray*}%
Solving the above linear system for $S_{n}^{M,N}(c)$ and $%
[S_{n}^{M,N}]^{\prime }(c)$ we obtain (\ref{[Sec2]-Sn-Det}) and (\ref%
{[Sec2]-Sn-Det}).

This completes the proof.
\end{proof}



From the above lemma, we can also express $S_{n}^{M,N}(x)$ as follows%
\begin{equation*}
S_{n}^{M,N}(x)=\frac{%
\begin{vmatrix}
P_{n}(x) & MK_{n-1}(x,c) & NK_{n-1}^{(0,1)}(x,c) \\
P_{n}(c) & 1+M\,K_{n-1}(c,c) & N\,K_{n-1}^{(0,1)}(c,c) \\
\lbrack P_{n}]^{\prime }(c) & M\,K_{n-1}^{(0,1)}(c,c) & 1+N%
\,K_{n-1}^{(1,1)}(c,c)%
\end{vmatrix}%
}{%
\begin{vmatrix}
1+M\,K_{n-1}(c,c) & N\,K_{n-1}^{(0,1)}(c,c) \\
M\,K_{n-1}^{(0,1)}(c,c) & 1+N\,K_{n-1}^{(1,1)}(c,c)%
\end{vmatrix}%
}.
\end{equation*}

In terms of the orthonormal polynomials (\ref{[Sec2]-ConnFormS}) becomes%
\begin{equation}
s_{n}^{M,N}(x)=\frac{t_{n}}{r_{n}}p_{n}(x)-M\,s_{n}^{M,N}(c)K_{n-1}(x,c)-N%
\,[s_{n}^{M,N}]^{\prime }(c)K_{n-1}^{(0,1)}(x,c).  \label{formula2}
\end{equation}


As a direct consequence of Lemma \ref{S2-LEMMA-01}, we get the following
result concerning the norm of the Sobolev type polynomials $S_{n}^{M,N}$



\begin{lemma}
\label{S2-LEMMA-02}For $c\in \mathbb{R}_{+}$ the norm of the monic Sobolev
type polynomials $S_{n}^{M,N}$, orthogonal with respect to (\ref%
{S1-DicrSob-Inn}) is%
\begin{equation*}
\frac{1}{t_{n}^{2}}=||S_{n}^{M,N}||_{S}^{2}=||P_{n}||_{\mu
}^{2}+M\,S_{n}^{M,N}(c)P_{n}(c)+N\,[S_{n}^{M,N}]^{\prime }(c)[P_{n}]^{\prime
}(c).
\end{equation*}
\end{lemma}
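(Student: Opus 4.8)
The plan is to compute the Sobolev norm $\|S_n^{M,N}\|_S^2 = \langle S_n^{M,N}, S_n^{M,N}\rangle_S$ directly, exploiting the fact that $S_n^{M,N}$ is monic (so its norm equals $\langle S_n^{M,N}, x^n\rangle_S$ up to the standard manipulation) and, more conveniently, that it is orthogonal to every polynomial of degree less than $n$ with respect to $\langle\cdot,\cdot\rangle_S$. The cleanest route is to pair $S_n^{M,N}$ against the connection formula~(\ref{[Sec2]-ConnFormS}) itself: write
\begin{equation*}
\|S_n^{M,N}\|_S^2 = \langle S_n^{M,N},\, P_n - M\,S_n^{M,N}(c)K_{n-1}(\cdot,c) - N\,[S_n^{M,N}]'(c)K_{n-1}^{(0,1)}(\cdot,c)\rangle_S.
\end{equation*}

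First I would handle the term $\langle S_n^{M,N}, P_n\rangle_S$. Expanding the Sobolev inner product~(\ref{S1-DicrSob-Inn}) gives
\begin{equation*}
\langle S_n^{M,N}, P_n\rangle_S = \int_E S_n^{M,N}(x)P_n(x)\,d\mu + M\,S_n^{M,N}(c)P_n(c) + N\,[S_n^{M,N}]'(c)[P_n]'(c).
\end{equation*}
Since $S_n^{M,N}$ and $P_n$ are both monic of degree $n$, the difference $S_n^{M,N}-P_n$ has degree at most $n-1$, so $\int_E S_n^{M,N}P_n\,d\mu = \int_E P_n^2\,d\mu + \int_E (S_n^{M,N}-P_n)P_n\,d\mu = \|P_n\|_\mu^2$, the last integral vanishing by orthogonality of $P_n$ to lower degrees. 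That already produces the three claimed terms.

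Next I would show the two remaining inner products vanish, which is where the real content lies. The key observation is that $K_{n-1}(\cdot,c)$ and $K_{n-1}^{(0,1)}(\cdot,c)$ are polynomials in the first variable of degree $n-1$; because $S_n^{M,N}$ is orthogonal with respect to $\langle\cdot,\cdot\rangle_S$ to all polynomials of degree $\le n-1$, we get $\langle S_n^{M,N}, K_{n-1}(\cdot,c)\rangle_S = 0$ and $\langle S_n^{M,N}, K_{n-1}^{(0,1)}(\cdot,c)\rangle_S = 0$. Hence the whole contribution of the $M$ and $N$ correction terms drops out, leaving exactly $\|S_n^{M,N}\|_S^2 = \|P_n\|_\mu^2 + M\,S_n^{M,N}(c)P_n(c) + N\,[S_n^{M,N}]'(c)[P_n]'(c)$, and the identity $1/t_n^2 = \|S_n^{M,N}\|_S^2$ is just the monic normalization. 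The main obstacle is to argue the vanishing cleanly: one must confirm that orthogonality of the Sobolev polynomial to lower-degree polynomials indeed applies to $K_{n-1}^{(0,1)}(\cdot,c)$, which as a function of $x$ is still a polynomial of degree $n-1$ (differentiation was taken in the second variable $y$ at $y=c$). I expect no genuine difficulty beyond being careful that every auxiliary kernel appearing is a degree-$(n-1)$ polynomial in $x$, so that Sobolev orthogonality of $S_n^{M,N}$ annihilates it.
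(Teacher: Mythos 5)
Your proposal is correct and follows essentially the same route as the paper: substitute the connection formula of Lemma \ref{S2-LEMMA-01} into $\langle S_n^{M,N},S_n^{M,N}\rangle_S$, use Sobolev orthogonality of $S_n^{M,N}$ to the degree-$(n-1)$ kernel polynomials to kill the $M$ and $N$ correction terms, and expand $\langle S_n^{M,N},P_n\rangle_S$ with $\int_E S_n^{M,N}P_n\,d\mu=\Vert P_n\Vert_\mu^2$ by monicity. The paper's proof is merely terser, leaving these same vanishing and monicity arguments implicit.
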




\begin{proof}
From (\ref{[Sec2]-ConnFormS}) we have%
\begin{equation*}
S_{n}^{M,N}(x)=P_{n}(x)-M\,S_{n}^{M,N}(c)K_{n-1}(x,c)-N\,[S_{n}^{M,N}]^{%
\prime }(c)K_{n-1}^{(0,1)}(x,c)
\end{equation*}%
and according to (\ref{S1-DicrSob-Inn}) we get%
\begin{equation*}
\langle S_{n}^{M,N}(x),S_{n}^{M,N}(x)\rangle _{S}=\langle
S_{n}^{M,N}(x),P_{n}(x)\rangle
+M\,S_{n}^{M,N}(c)P_{n}(c)+N\,[S_{n}^{M,N}]^{\prime }(c)[P_{n}]^{\prime }(c).
\end{equation*}%
This completes the proof.
\end{proof}



Next, we represent the Sobolev type orthogonal polynomials in terms of the
polynomial kernels associated with the sequence of orthonormal polynomials $%
\{p_{n}(x)\}_{n\geq 0}$ and its derivatives. Another proof of this result
can be found in \cite[Prop. 5.6, p. 115]{Thesis-H12}.



\begin{lemma}
\label{S2-LEMMA-03}The sequence of Sobolev type orthonormal polynomials $%
\{s_{n}^{M,N}(x)\}_{n\geq 0}$ can be expressed as%
\begin{eqnarray}
s_{n}^{M,N}(x) &=&\alpha _{n+1,n}p_{n+1}(x)+\alpha _{n,n}p_{n}(x)  \notag \\
&&-M\,s_{n}^{M,N}(c)K_{n+1}(x,c)-N\,[s_{n}^{M,N}]^{\prime
}(c)K_{n+1}^{(0,1)}(x,c),  \label{[Sec2]-resultado-1}
\end{eqnarray}%
where%
\begin{eqnarray*}
\alpha _{n+1,n} &=&M\,s_{n}^{M,N}(c)p_{n+1}(c)+N\,[s_{n}^{M,N}]^{\prime
}(c)[p_{n+1}]^{\prime }(c), \\
\alpha _{n,n} &=&\frac{t_{n}}{r_{n}}+M\,s_{n}^{M,N}(c)p_{n}(c)+N%
\,[s_{n}^{M,N}]^{\prime }(c)[p_{n}]^{\prime }(c).
\end{eqnarray*}
\end{lemma}



\begin{proof}
From (\ref{S1-CD-orthonormals})%
\begin{eqnarray*}
K_{n-1}(x,c) &=&K_{n+1}(x,c)-p_{n+1}(x)p_{n+1}(c)-p_{n}(x)p_{n}(c), \\
K_{n-1}^{(0,1)}(x,c) &=&K_{n+1}^{(0,1)}(x,c)-p_{n+1}(x)[p_{n+1}]^{\prime
}(c)-p_{n}(x)[p_{n}]^{\prime }(c).
\end{eqnarray*}%
Replacing in (\ref{formula2}) yields%
\begin{eqnarray*}
s_{n}^{M,N}(x) &=&\frac{t_{n}}{r_{n}}p_{n}(x)-M\,s_{n}^{M,N}(c)\left(
K_{n+1}(x,c)-p_{n+1}(x)p_{n+1}(c)-p_{n}(x)p_{n}(c)\right) \\
&&-N\,[s_{n}^{M,N}]^{\prime }(c)\left(
K_{n+1}^{(0,1)}(x,c)-p_{n+1}(x)[p_{n+1}]^{\prime
}(c)-p_{n}(x)[p_{n}]^{\prime }(c)\right) \\
&=&\left[ M\,s_{n}^{M,N}(c)p_{n+1}(c)+N\,[s_{n}^{M,N}]^{\prime
}(c)[p_{n+1}]^{\prime }(c)\right] p_{n+1}(x) \\
&&+\left[ \frac{t_{n}}{r_{n}}+M\,s_{n}^{M,N}(c)p_{n}(c)+N\,[s_{n}^{M,N}]^{%
\prime }(c)[p_{n}]^{\prime }(c)\right] p_{n}(x) \\
&&-M\,s_{n}^{M,N}(c)K_{n+1}(x,c)-N\,[s_{n}^{M,N}]^{\prime
}(c)K_{n+1}^{(0,1)}(x,c)
\end{eqnarray*}%
This completes the proof.
\end{proof}



Next we expand the polynomials $\{p_{n}(x)\}_{n\geq 0}$ in terms of the
polynomials $\{p_{n}^{[2]}(x)\}_{n\geq 0}$. This result is already addressed
in \cite[Prop. 5.7, p.116]{Thesis-H12} as well as in \cite{Ardila2022} but
we include here an alternative proof



\begin{lemma}
\label{S2-LEMMA-04}The sequence of polynomials $\{p_{n}(x)\}_{n\geq 0}$,
orthonormal with respect to $d\mu $, is expressed in terms of the $2-$%
iterated orthonormal polynomials\ $\{p_{n}^{[2]}(x)\}_{n\geq 0}$ as follows%
\begin{equation*}
p_{n}(x)=\xi _{n,n}p_{n}^{[2]}(x)+\xi _{n-1,n}p_{n-1}^{[2]}(x)+\xi
_{n-2,n}p_{n-2}^{[2]}(x),
\end{equation*}%
where
\begin{eqnarray*}
\xi _{n,n} &=&\frac{r_{n}}{r_{n}^{[2]}}=\frac{r_{n}}{r_{n+1}}\left( \frac{%
K_{n+1}(c,c)}{K_{n}(c,c)}\right) ^{1/2}=e_{n}^{1/2}, \\
\xi _{n-1,n} &=&-d_{n-1}\left( \frac{K_{n-1}(c,c)}{K_{n}(c,c)}\right) ^{1/2},
\\
\xi _{n-2,n} &=&\frac{r_{n-1}}{r_{n}}\left( \frac{K_{n-2}(c,c)}{K_{n-1}(c,c)}%
\right) ^{1/2}=\frac{r_{n-1}^{2}}{r_{n}r_{n+1}}e_{n-2}^{1/2}\,.
\end{eqnarray*}
\end{lemma}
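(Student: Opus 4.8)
The plan is to exploit that $\{p_j^{[2]}(x)\}_{j\geq 0}$ is an orthonormal basis for $\mathbb{P}$ equipped with $\langle\cdot,\cdot\rangle_{[2]}$, so that the degree-$n$ polynomial $p_n$ admits a unique finite expansion $p_n(x)=\sum_{j=0}^{n}\xi_{j,n}\,p_j^{[2]}(x)$ with Fourier coefficients
\begin{equation*}
\xi_{j,n}=\langle p_n,p_j^{[2]}\rangle_{[2]}=\int_E p_n(x)\,p_j^{[2]}(x)\,(x-c)^2\,d\mu .
\end{equation*}
First I would absorb the factor $(x-c)^2$ into the second argument and read this coefficient as an inner product against the \emph{standard} measure, $\xi_{j,n}=\langle p_n,(x-c)^2 p_j^{[2]}\rangle_\mu$.

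The decisive step is then a degree count. Since $(x-c)^2 p_j^{[2]}(x)$ has degree $j+2$, while $p_n$ is orthogonal with respect to $\mu$ to every polynomial of degree strictly less than $n$, we get $\xi_{j,n}=0$ whenever $j+2<n$. Hence only the indices $j=n,\,n-1,\,n-2$ survive and the expansion collapses automatically to the three claimed terms. This is really the only conceptual point: the width-three banded structure is forced by the quadratic Christoffel factor, mirroring the forward relation (\ref{LagKer[2]monic}).

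To evaluate the three surviving coefficients I would feed the orthonormal connection formula (\ref{confop2}), read with $n$ replaced by $j$, into $\xi_{j,n}=\langle p_n,(x-c)^2 p_j^{[2]}\rangle_\mu$ and extract the $p_n$-component by orthonormality of $\{p_k\}$. For $j=n$ only the $p_n$ term of (\ref{confop2}) contributes, giving $\xi_{n,n}=e_n\,r_n^{[2]}/r_n$; for $j=n-1$ the middle term gives $\xi_{n-1,n}=-d_{n-1}\,r_{n-1}^{[2]}/r_n$; and for $j=n-2$ the leading term gives $\xi_{n-2,n}=r_{n-2}^{[2]}/r_n$. As a cross-check, comparing leading coefficients on both sides of the expansion instantly yields $\xi_{n,n}=r_n/r_n^{[2]}$, which must agree with $e_n\,r_n^{[2]}/r_n$ through $e_n=(r_n/r_n^{[2]})^2$ from (\ref{endnSimples}).

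The remaining work is purely cosmetic rewriting into the stated forms: using (\ref{r2rnKK}) in the shape $r_m^{[2]}/r_{m+1}=(K_m(c,c)/K_{m+1}(c,c))^{1/2}$ converts $r_{n-1}^{[2]}/r_n$ into $(K_{n-1}(c,c)/K_n(c,c))^{1/2}$ and $r_{n-2}^{[2]}/r_n=(r_{n-1}/r_n)(r_{n-2}^{[2]}/r_{n-1})$ into $(r_{n-1}/r_n)(K_{n-2}(c,c)/K_{n-1}(c,c))^{1/2}$, while (\ref{endnSimples}) rewrites $\xi_{n,n}=r_n/r_n^{[2]}=e_n^{1/2}$. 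The main obstacle is therefore not any single estimate but the careful bookkeeping of the three normalizing constants $r_n$, $r_n^{[2]}$ and the kernel values $K_n(c,c)$, together with consistent use of the two conversion identities (\ref{r2rnKK}) and (\ref{endnSimples}); keeping the index shifts straight when substituting (\ref{confop2}) at $j=n-1$ and $j=n-2$ is where subscript and sign slips are most likely to occur.
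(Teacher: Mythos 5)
Your proposal is correct and follows essentially the same route as the paper: both expand $p_n$ in the orthonormal basis $\{p_j^{[2]}\}$, kill the coefficients with $j<n-2$ by the degree count on $(x-c)^2p_j^{[2]}$, and evaluate the three survivors via (\ref{confop2}) together with the conversions (\ref{r2rnKK}) and (\ref{endnSimples}). Your explicit statement of the vanishing argument and the leading-coefficient cross-check $\xi_{n,n}=r_n/r_n^{[2]}=e_n\,r_n^{[2]}/r_n$ are welcome additions, but the substance is identical.
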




\begin{proof}
Taking into account (\ref{LagKer[2]monic}), (\ref{r2rnKK}) and (\ref{confop2}%
), for the first coefficient we immediately have%
\begin{eqnarray*}
\xi _{n,n} &=&\langle p_{n}(x),p_{n}^{[2]}(x)\rangle _{\lbrack 2]}=\langle
p_{n}(x),(x-c)^{2}p_{n}^{[2]}(x)\rangle \\
&=&\frac{r_{n}}{r_{n}^{[2]}}=\frac{r_{n}}{r_{n+1}}\left( \frac{K_{n+1}(c,c)}{%
K_{n}(c,c)}\right) ^{1/2}=e_{n}^{1/2}.
\end{eqnarray*}%
For the second coefficient, from (\ref{confop2}) we have%
\begin{eqnarray*}
\xi _{n-1,n} &=&\langle p_{n}(x),p_{n-1}^{[2]}(x)\rangle _{\lbrack
2]}=\langle p_{n}(x),(x-c)^{2}p_{n-1}^{[2]}(x)\rangle \\
&=&\langle p_{n}(x),-d_{n-1}\left( \frac{K_{n-1}(c,c)}{K_{n}(c,c)}\right)
^{1/2}p_{n}(x)\rangle =-d_{n-1}\left( \frac{K_{n-1}(c,c)}{K_{n}(c,c)}\right)
^{1/2}.
\end{eqnarray*}%
Finally, for the last coefficient, we get%
\begin{eqnarray*}
\xi _{n-2,n} &=&\langle p_{n}(x),p_{n-2}^{[2]}(x)\rangle _{\lbrack
2]}=\langle p_{n}(x),(x-c)^{2}p_{n-2}^{[2]}(x)\rangle \\
&=&\frac{r_{n-2}^{[2]}}{r_{n}}=\frac{r_{n-1}}{r_{n}}\left( \frac{K_{n-2}(c,c)%
}{K_{n-1}(c,c)}\right) ^{1/2}=\frac{r_{n-1}^{2}}{r_{n}r_{n+1}}e_{n-2}^{1/2}.
\end{eqnarray*}%
This completes the proof.
\end{proof}



Next, let us obtain a third representation for the Sobolev type OPS in terms
of the polynomials orthonormal with respect to $(x-c)^{2}d\mu $. This
expression will be very useful to find the connection of these polynomials
with the matrix orthogonal polynomials, and we include the proof for the
convenience of the reader.



\begin{theorem}
\label{S2-THEOR-01} Let $\{s_{n}^{M,N}(x)\}_{n\geq 0}$ be the sequence
Sobolev-type polynomials orthonormal with respect to (\ref{S1-DicrSob-Inn}),
and let $\{p_{n}^{[2]}(x)\}_{n\geq 0}$\ be the sequence of polynomials
orthonormal with respect to the inner product (\ref{S1-k-iter-OP}) with $k=2$%
. Then, the following expression holds%
\begin{equation}
s_{n}^{M,N}(x)=\gamma _{n,n}p_{n}^{[2]}(x)+\gamma
_{n-1,n}p_{n-1}^{[2]}(x)+\gamma _{n-2,n}p_{n-2}^{[2]}(x),
\label{[Sec2]-ThirdCF-01}
\end{equation}%
where,%
\begin{equation*}
\gamma _{n,n}=\frac{t_{n}}{r_{n}^{[2]}}=\frac{t_{n}}{r_{n+1}}\left( \frac{%
K_{n+1}(c,c)}{K_{n}(c,c)}\right) ^{1/2},
\end{equation*}%
\begin{equation*}
\gamma _{n-1,n}=-\left( \frac{K_{n-1}(c,c)}{K_{n}(c,c)}\right) ^{1/2}
\end{equation*}%
\begin{equation*}
\times \left( d_{n-1}\frac{t_{n}}{r_{n}}+e_{n-1}\frac{r_{n}}{r_{n-1}}\left[
M\,s_{n}^{M,N}(c)p_{n-1}(c)+N\,[s_{n}^{M,N}]^{\prime }(c)[p_{n}]^{\prime }(c)%
\right] \right) ,
\end{equation*}%
\begin{equation*}
\gamma _{n-2,n}=\frac{r_{n-1}}{t_{n}}\left( \frac{K_{n-2}(c,c)}{K_{n-1}(c,c)}%
\right) ^{1/2}.
\end{equation*}
\end{theorem}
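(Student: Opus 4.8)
The plan is to compute the Fourier coefficients of $s_n^{M,N}$ directly in the $[2]$ orthonormal basis. Since $\{p_j^{[2]}\}_{j\ge 0}$ spans $\mathbb{P}$ and $\deg s_n^{M,N}=n$, I would start from $s_n^{M,N}(x)=\sum_{j=0}^{n}\gamma_{j,n}\,p_j^{[2]}(x)$ with $\gamma_{j,n}=\langle s_n^{M,N},p_j^{[2]}\rangle_{[2]}$. The decisive move is to rewrite each coefficient through the definition (\ref{S1-k-iter-OP}) of the $[2]$ inner product as $\gamma_{j,n}=\langle s_n^{M,N},(x-c)^2 p_j^{[2]}\rangle_\mu$, an integral against $d\mu$ alone. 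Because $(x-c)^2 p_j^{[2]}(x)$ has a double zero at $x=c$, both point evaluations in the Sobolev form (\ref{S1-DicrSob-Inn}) vanish, whence $\gamma_{j,n}=\langle s_n^{M,N},(x-c)^2 p_j^{[2]}\rangle_S$. This identity is exactly what lets the Sobolev orthogonality of $s_n^{M,N}$ do the work.

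With this in hand the truncation is immediate: for $j\le n-3$ the polynomial $(x-c)^2 p_j^{[2]}$ has degree at most $n-1$, so $\gamma_{j,n}=0$ by Sobolev orthogonality, leaving only the three terms $j=n,\,n-1,\,n-2$ displayed in (\ref{[Sec2]-ThirdCF-01}). For the two extreme coefficients I would then argue by leading terms. Matching the leading coefficient $t_n$ of $s_n^{M,N}$ against that of $\gamma_{n,n}p_n^{[2]}$ gives $\gamma_{n,n}=t_n/r_n^{[2]}$, and inserting (\ref{r2rnKK}) yields the stated value. For $\gamma_{n-2,n}$ the polynomial $(x-c)^2 p_{n-2}^{[2]}$ has degree exactly $n$ with leading coefficient $r_{n-2}^{[2]}$; since $\langle s_n^{M,N},\cdot\rangle_S$ applied to a degree-$n$ polynomial only sees its leading coefficient divided by $t_n$, I obtain $\gamma_{n-2,n}=r_{n-2}^{[2]}/t_n$, which after substituting (\ref{r2rnKK}) with index $n-2$ becomes the announced $\tfrac{r_{n-1}}{t_n}(K_{n-2}(c,c)/K_{n-1}(c,c))^{1/2}$.

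The genuine work is $\gamma_{n-1,n}$, because $(x-c)^2 p_{n-1}^{[2]}$ has degree $n+1$ and the clean leading-coefficient shortcut no longer applies. Here I would expand $(x-c)^2 p_{n-1}^{[2]}$ via the orthonormal connection formula (\ref{confop2}) into a combination of $p_{n+1},p_n,p_{n-1}$ and evaluate $\langle s_n^{M,N},\cdot\rangle_\mu$ termwise: the $p_{n+1}$ contribution vanishes by degree, the relation (\ref{formula2}) gives $\langle s_n^{M,N},p_n\rangle_\mu=t_n/r_n$, and $\langle s_n^{M,N},p_{n-1}\rangle_\mu$ is obtained by setting $\langle s_n^{M,N},p_{n-1}\rangle_S=0$ and solving (\ref{S1-DicrSob-Inn}) for the $\mu$-part, producing the point-mass terms $-M\,s_n^{M,N}(c)p_{n-1}(c)-N\,[s_n^{M,N}]'(c)[p_{n-1}]'(c)$. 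Collecting these with the coefficients $d_{n-1},e_{n-1}$ and once more using (\ref{r2rnKK}) to convert the ratios of $r$'s into the factor $(K_{n-1}(c,c)/K_n(c,c))^{1/2}$ delivers $\gamma_{n-1,n}$.

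The main obstacle is precisely this last step: the degree-$(n+1)$ overflow forces one to carry the full three-term expansion (\ref{confop2}) together with the explicit $d_{n-1}$ and $e_{n-1}$, and it is the only place where the dependence on the masses $M,N$ survives, so the bookkeeping of the point evaluations must be done with care. Everything else reduces to the two structural facts established above, namely that conjugating by $(x-c)^2$ both converts the $[2]$ product into the $\mu$ product and annihilates the Sobolev point masses, which is what collapses the expansion to three terms and fixes the extreme coefficients almost for free.
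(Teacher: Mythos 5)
Your proposal is correct and follows essentially the same route as the paper: compute $\gamma_{j,n}=\langle s_n^{M,N},p_j^{[2]}\rangle_{[2]}$, fix $\gamma_{n,n}$ and $\gamma_{n-2,n}$ by leading-coefficient/degree arguments together with (\ref{r2rnKK}), and handle $\gamma_{n-1,n}$ by expanding $(x-c)^2p_{n-1}^{[2]}$ through (\ref{confop2}) and evaluating the resulting $\mu$-integrals against $s_n^{M,N}$. The only cosmetic difference is that you obtain $\langle s_n^{M,N},p_{n-1}\rangle_\mu$ directly from the Sobolev orthogonality relation $\langle s_n^{M,N},p_{n-1}\rangle_S=0$ rather than by integrating (\ref{formula2}); both give $-M\,s_n^{M,N}(c)p_{n-1}(c)-N\,[s_n^{M,N}]'(c)[p_{n-1}]'(c)$, and your $[p_{n-1}]'(c)$ is in fact the correct factor where the paper's displayed formula carries $[p_n]'(c)$.
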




\begin{proof}
For $\gamma _{n,n}$, matching the leading coefficients of $s_{n}^{M,N}(x)$
and $p_{n}^{[2]}(x)$, it is a straightforward consequence to see that%
\begin{equation*}
\gamma _{n,n}=\langle s_{n}^{M,N}(x),p_{n}^{[2]}(x)\rangle _{\lbrack 2]}=%
\frac{t_{n}}{r_{n}^{[2]}}.
\end{equation*}%
Next from (\ref{r2rnKK})
\begin{equation*}
\gamma _{n,n}=\frac{t_{n}}{r_{n}^{[2]}}=\frac{t_{n}}{r_{n+1}}\left( \frac{%
K_{n+1}(c,c)}{K_{n}(c,c)}\right) ^{1/2}.
\end{equation*}%
For $\gamma _{n-1,n}$ we need some extra work. From (\ref{[Sec2]-resultado-1}%
) we have%
\begin{eqnarray*}
\gamma _{n-1,n} &=&\langle s_{n}^{M,N}(x),p_{n-1}^{[2]}(x)\rangle _{\lbrack
2]}=\int_{E}s_{n}^{M,N}(x)(x-c)^{2}p_{n-1}^{[2]}(x)d\mu \\
&=&\int_{E}s_{n}^{M,N}(x)\left[ -d_{n-1}\frac{r_{n-1}^{[2]}}{r_{n}}%
p_{n}(x)+e_{n-1}\frac{r_{n-1}^{[2]}}{r_{n-1}}p_{n-1}(x)\right] d\mu \\
&=&-d_{n-1}\frac{t_{n}}{r_{n}}\left( \frac{K_{n-1}(c,c)}{K_{n}(c,c)}\right)
^{1/2}+e_{n-1}\frac{r_{n}}{r_{n-1}}\left( \frac{K_{n-1}(c,c)}{K_{n}(c,c)}%
\right) ^{1/2}\int_{E}s_{n}^{M,N}(x)p_{n-1}(x)d\mu .
\end{eqnarray*}%
The last integral can be computed using (\ref{formula2})%
\begin{equation*}
\int_{E}s_{n}^{M,N}(x)p_{n-1}(x)d\mu =
\end{equation*}%
\begin{eqnarray*}
&&\int_{E}\left( -M\,s_{n}^{M,N}(c)K_{n-1}(x,c)-N\,[s_{n}^{M,N}]^{\prime
}(c)K_{n-1}^{(0,1)}(x,c)\right) p_{n-1}(x)d\mu \\
&=&-M\,s_{n}^{M,N}(c)p_{n-1}(c)-N\,[s_{n}^{M,N}]^{\prime }(c)[p_{n}]^{\prime
}(c).
\end{eqnarray*}%
Thus%
\begin{equation*}
\gamma _{n-1,n}=-\left( \frac{K_{n-1}(c,c)}{K_{n}(c,c)}\right) ^{1/2}
\end{equation*}%
\begin{equation*}
\times \left( d_{n-1}\frac{t_{n}}{r_{n}}+e_{n-1}\frac{r_{n}}{r_{n-1}}\left[
M\,s_{n}^{M,N}(c)p_{n-1}(c)+N\,[s_{n}^{M,N}]^{\prime }(c)[p_{n}]^{\prime }(c)%
\right] \right) .
\end{equation*}%
Finally, for the last coefficient we have
\begin{eqnarray*}
\gamma _{n-2,n} &=&\langle s_{n}^{M,N}(x),p_{n-2}^{[2]}(x)\rangle _{\lbrack
2]}=\langle s_{n}^{M,N}(x),(x-c)^{2}p_{n-2}^{[2]}(x)\rangle _{S} \\
&=&t_{n}r_{n-2}^{[2]}\langle S_{n}^{M,N}(x),(x-c)^{2}P_{n-2}^{[2]}(x)\rangle
_{S} \\
&=&t_{n}r_{n-2}^{[2]}||S_{n}^{M,N}||_{S}^{2}=\frac{r_{n-2}^{[2]}}{t_{n}}=%
\frac{r_{n-1}}{t_{n}}\left( \frac{K_{n-2}(c,c)}{K_{n-1}(c,c)}\right) ^{1/2}.
\end{eqnarray*}%
This completes the proof.
\end{proof}



\section{The five term recurrence relation}

\label{Sec5-5TRR}



In this section, we will obtain the five term recurrence relation that the
sequence of Sobolev-type orthonormal polynomials $\{s_{n}^{M,N}(x)\}_{n\geq
0}$ satisfies. We use orthonormal polynomials because all the matrices
associated with the multiplication operators we are dealing with are
symmetric. Later on, we will derive an interesting relation between the five
diagonal matrix $\mathbf{H}$ associated with the multiplication operator by $%
(x-c)^{2}$ in terms of the orthonormal basis $\{s_{n}^{M,N}(x)\}_{n\geq 0}$,
and the tridiagonal Jacobi matrix $\mathbf{J}_{[2]}$ associated with the
three term recurrence relation satisfied by the $2-$iterated orthonormal
polynomials $\{p_{n}^{[2]}(x)\}_{n\geq 0}$.

To do that, we will use the following remarkable fact



\begin{theorem}
\label{S2-THEOR-02} The multiplication operator by $(x-c)^{2}$\ is a
symmetric operator with respect to the discrete Sobolev inner product (\ref%
{S1-DicrSob-Inn}). In other words, for any $p(x),\,q(x)\in \mathbb{P}$, it
satisfies%
\begin{equation}
\langle (x-c)^{2}p(x),q(x)\rangle _{S}=\langle p(x),(x-c)^{2}q(x)\rangle
_{S}.  \label{[Sec3]-SymmInn-bis}
\end{equation}
\end{theorem}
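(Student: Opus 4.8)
The plan is to expand both sides of (\ref{[Sec3]-SymmInn-bis}) using the definition (\ref{S1-DicrSob-Inn}) of the Sobolev-type inner product, and to observe that the two discrete terms carried by $M$ and $N$ are annihilated by the factor $(x-c)^2$. The decisive point is that $(x-c)^2$ has a double zero at $x=c$, so both it and its first derivative vanish there; since the perturbation in (\ref{S1-DicrSob-Inn}) only probes the value and the first derivative at $c$, those terms drop out on both sides.

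First I would expand the left-hand side as $\int_E (x-c)^2 p(x) q(x)\, d\mu$ plus the two discrete contributions $M\,[(x-c)^2 p(x)]_{x=c}\, q(c)$ and $N\,[(x-c)^2 p(x)]'_{x=c}\, q'(c)$. Evaluating $(x-c)^2 p(x)$ at $x=c$ gives $0$, so the $M$ term vanishes. For the $N$ term I would differentiate, writing $[(x-c)^2 p(x)]' = 2(x-c)p(x) + (x-c)^2 p'(x)$, and note that each summand carries a positive power of $(x-c)$, so the derivative also vanishes at $x=c$. Hence the left-hand side reduces to $\int_E (x-c)^2 p(x) q(x)\, d\mu$.

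Then I would run the identical argument on the right-hand side $\langle p(x),(x-c)^2 q(x)\rangle_S$: the factor $(x-c)^2$ now multiplies $q$, but the same two facts --- that $(x-c)^2$ and its first derivative vanish at $c$ --- kill the $M$ and $N$ terms, leaving $\int_E p(x)(x-c)^2 q(x)\, d\mu$. The integrand $(x-c)^2 p(x) q(x)$ is symmetric in $p$ and $q$, so the two integrals coincide and (\ref{[Sec3]-SymmInn-bis}) follows.

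I do not expect a genuine obstacle; the statement is exactly the matching between the degree of the polynomial multiplier $(x-c)^2$ and the highest order of derivative appearing in the discrete part of the inner product. Were the multiplier only $(x-c)$, its first derivative would not vanish at $c$, the $N$ term would survive, and symmetry would fail --- which is precisely why $(x-c)^2$ is the natural operator to consider for (\ref{S1-DicrSob-Inn}) and the reason this symmetry underlies the five diagonal structure studied in the sequel.
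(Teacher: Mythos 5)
Your argument is correct and is exactly the computation the paper has in mind: its proof of Theorem \ref{S2-THEOR-02} consists of the single line that the claim is ``a straightforward consequence of (\ref{S1-DicrSob-Inn})'', and your expansion --- the $M$ and $N$ terms vanish because $(x-c)^2$ and its first derivative are zero at $x=c$, leaving the symmetric integral $\int_E (x-c)^2 p(x) q(x)\, d\mu$ on both sides --- is precisely that straightforward consequence, spelled out.
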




\begin{proof}
The proof is a straightforward consequence of (\ref{S1-DicrSob-Inn}).
\end{proof}



Next, we will obtain the coefficients of the aforementioned five term
recurrence relation. Let consider the Fourier expansion of $%
(x-c)^{2}s_{n}^{M,N}(x)$ in terms of $\{s_{n}^{M,N}(x)\}_{n\geq 0}$%
\begin{equation}
(x-c)^{2}s_{n}^{M,N}(x)=\sum_{k=0}^{n+2}\rho _{k,n}s_{k}^{M,N}(x),
\label{[Sec3]-5TRR-Expn}
\end{equation}%
where%
\begin{equation*}
\rho _{k,n}=\left\langle (x-c)^{2}s_{n}^{M,N}(x),s_{k}^{M,N}(x)\right\rangle
_{S},\quad k=0,\ldots ,n+2.
\end{equation*}%
From (\ref{[Sec3]-SymmInn-bis})%
\begin{equation*}
\rho _{k,n}=\left\langle s_{n}^{M,N}(x),(x-c)^{2}s_{k}^{M,N}(x)\right\rangle
_{S},\quad k=0,\ldots ,n+2.
\end{equation*}%
Hence, $\rho _{k,n}=0$ for $k=0,\ldots ,n-3$. Taking into account that%
\begin{equation*}
\lbrack (x-c)^{2}s_{n}^{M,N}(x)]|_{x=c}=[(x-c)^{2}s_{n}^{M,N}(x)]^{\prime
}|_{x=c}=0,
\end{equation*}%
and using \cite[Th. 1, p. 174]{MaZeFeHu11} we get%
\begin{equation}
\langle (x-c)^{2}s_{n}^{M,N}(x),s_{k}^{M,N}(x)\rangle _{S}=\langle
s_{n}^{M,N}(x),s_{k}^{M,N}(x)\rangle _{\lbrack 2]}.  \label{[Sec3]-Property1}
\end{equation}%
Notice that%
\begin{equation}
\langle (x-c)^{2}s_{n}^{M,N}(x),s_{k}^{M,N}(x)\rangle _{S}=\langle
(x-c)^{2}s_{n}^{M,N}(x),s_{k}^{M,N}(x)\rangle .  \label{[Sec3]-Property2}
\end{equation}%
Next, using the connection formula (\ref{[Sec2]-ThirdCF-01}) we have%
\begin{eqnarray*}
\rho _{n+2,n} &=&\langle (x-c)^{2}s_{n}^{M,N}(x),s_{n+2}^{M,N}(x)\rangle
_{S}=\langle s_{n}^{M,N}(x),s_{n+2}^{M,N}(x)\rangle _{\lbrack 2]} \\
&=&\gamma _{n,n}\gamma _{n,n+2}=\frac{t_{n}}{t_{n+2}},
\end{eqnarray*}%
\begin{eqnarray*}
\rho _{n+1,n} &=&\langle (x-c)^{2}s_{n}^{M,N}(x),s_{n+1}^{M,N}(x)\rangle
_{S}=\langle s_{n}^{M,N}(x),s_{n+1}^{M,N}(x)\rangle _{\lbrack 2]} \\
&=&\gamma _{n,n}\gamma _{n,n+1}\,\langle
p_{n}^{[2]}(x),p_{n}^{[2]}(x)\rangle _{\lbrack 2]}+\gamma _{n-1,n}\gamma
_{n-1,n+1}\,\langle p_{n-1}^{[2]}(x),p_{n-1}^{[2]}(x)\rangle _{\lbrack 2]} \\
&=&\gamma _{n,n}\gamma _{n,n+1}+\gamma _{n-1,n}\gamma _{n-1,n+1},
\end{eqnarray*}%
\begin{eqnarray*}
\rho _{n,n} &=&\langle (x-c)^{2}s_{n}^{M,N}(x),s_{n}^{M,N}(x)\rangle
_{S}=\langle s_{n}^{M,N}(x),s_{n}^{M,N}(x)\rangle _{\lbrack 2]} \\
&=&\gamma _{n,n}^{2}\,\langle p_{n}^{[2]}(x),p_{n}^{[2]}(x)\rangle _{\lbrack
2]}+\gamma _{n-1,n}^{2}\,\langle p_{n-1}^{[2]}(x),p_{n-1}^{[2]}(x)\rangle
_{\lbrack 2]}+\gamma _{n-2,n}^{2}\,\langle
p_{n-2}^{[2]}(x),p_{n-2}^{[2]}(x)\rangle _{\lbrack 2]} \\
&=&\gamma _{n,n}^{2}+\gamma _{n-1,n}^{2}+\gamma _{n-2,n}^{2},
\end{eqnarray*}

\begin{eqnarray*}
\rho _{n-1,n} &=&\langle (x-c)^{2}s_{n}^{M,N}(x),s_{n-1}^{M,N}(x)\rangle
_{S}=\langle s_{n}^{M,N}(x),s_{n-1}^{M,N}(x)\rangle _{\lbrack 2]} \\
&=&\gamma _{n-1,n}\gamma _{n-1,n-1}\,\langle
p_{n-1}^{[2]}(x),p_{n-1}^{[2]}(x)\rangle _{\lbrack 2]}+\gamma _{n-2,n}\gamma
_{n-2,n-1}\,\langle p_{n-2}^{[2]}(x),p_{n-2}^{[2]}(x)\rangle _{\lbrack 2]} \\
&=&\gamma _{n-1,n-1}\gamma _{n-1,n}+\gamma _{n-2,n}\gamma _{n-2,n-1}=\rho
_{n,n-1}\,,
\end{eqnarray*}

\begin{eqnarray*}
\rho _{n-2,n} &=&\langle (x-c)^{2}s_{n}^{M,N}(x),s_{n-2}^{M,N}(x)\rangle
_{S}=\langle s_{n}^{M,N}(x),s_{n-2}^{M,N}(x)\rangle _{\lbrack 2]} \\
&=&\gamma _{n-2,n-2}\gamma _{n-2,n}\,\langle
p_{n-2}^{[2]}(x),p_{n-2}^{[2]}(x)\rangle _{\lbrack 2]} \\
&=&\gamma _{n-2,n-2}\gamma _{n-2,n}=\frac{t_{n-2}}{t_{n}}.
\end{eqnarray*}

Introducing the following notation%
\begin{equation*}
\rho _{n-2,n}=a_{n},\quad \rho _{n-1,n}=b_{n},\quad \rho _{n,n}=c_{n}\,,
\end{equation*}%
(\ref{[Sec3]-5TRR-Expn}) reads%
\begin{equation*}
(x-c)^{2}s_{n}^{M,N}(x)=
\end{equation*}%
\begin{equation}
a_{n+2}s_{n+2}^{M,N}(x)+b_{n+1}s_{n+1}^{M,N}(x)+c_{n}s_{n}^{M,N}(x)+b_{n}s_{n-1}^{M,N}(x)+a_{n}s_{n-2}^{M,N}(x),\quad n\geq 0,
\label{[Sec3]-Coeff-5TRR-2}
\end{equation}%
where, by convention,%
\begin{equation*}
s_{-2}^{M,N}(x)=s_{-1}^{M,N}(x)=0.
\end{equation*}



\section{A matrix approach}

\label{Sec6-MatrixAppr}



In this Section we will deduce an interesting relation between the five
diagonal matrix $\mathbf{H}$ associated with the multiplication operator by $%
(x-c)^{2}$ associated with the orthonormal Sobolev type orthonormal
polynomials and the Jacobi matrix $\mathbf{J}_{[2]}$ associated with the $2-$
iterated orthonormal polynomials $\{p_{n}^{[2]}(x)\}_{n\geq 0}$.

First, we deal with the matrix representation of (\ref{[Sec3]-Coeff-5TRR-2})%
\begin{equation}
(x-c)^{2}\mathbf{\bar{s}}^{M,N}=\mathbf{H\,\bar{s}}^{M,N},
\label{FormMatrizH}
\end{equation}%
where $\mathbf{H}$ is the five diagonal semi-infinite symmetric matrix%
\begin{equation}
\mathbf{H=}%
\begin{bmatrix}
c_{0} & b_{1} & a_{2} & 0 & \cdots \\
b_{1} & c_{1} & b_{2} & a_{3} & \cdots \\
a_{2} & b_{2} & c_{2} & b_{3} & \ddots \\
0 & a_{3} & b_{3} & c_{3} & \ddots \\
\vdots & \vdots & \ddots & \ddots & \ddots%
\end{bmatrix}%
,  \label{MatrixH}
\end{equation}%
and $\mathbf{\bar{s}}^{M,N}=[s_{0}^{M,N}(x),s_{1}^{M,N}(x),s_{2}^{M,N}(x),%
\ldots ]^{\intercal }$.

Next, from (\ref{[Sec2]-ThirdCF-01}) we get%
\begin{equation}
\mathbf{\bar{s}}^{M,N}=\mathbf{T\,\bar{p}}^{[2]}  \label{Prop-Sobpert2}
\end{equation}%
where $\mathbf{T}$\ is the lower triangular, semi-infinite, and nonsingular
matrix with positive diagonal entries%
\begin{equation}
\mathbf{T}=%
\begin{bmatrix}
\gamma _{0,0} &  &  &  &  \\
\gamma _{0,1} & \gamma _{1,1} &  &  &  \\
\gamma _{0,2} & \gamma _{1,2} & \gamma _{2,2} &  &  \\
\gamma _{0,3} & \gamma _{1,3} & \gamma _{2,3} & \gamma _{3,3} &  \\
&  & \ddots & \ddots & \ddots%
\end{bmatrix}
\label{MatrixT}
\end{equation}%
and $\mathbf{\bar{p}}^{[2]}=[p_{0}^{[2]}(x),p_{1}^{[2]}(x),p_{2}^{[2]}(x),%
\ldots ]^{\intercal }$. We will denote by $\mathbf{J}$ the Jacobi matrix
associated with the orthonormal sequence $\{p_{n}(x)\}_{n\geq 0}$, with
respect to the measure $d\mu $. As a consequence, we have%
\begin{equation*}
x\,\mathbf{\bar{p}}=\mathbf{J}\,\mathbf{\bar{p}}.
\end{equation*}%
Let $\mathbf{J}_{[2]}$ be the Jacobi matrix associated with the $2-$
iterated OPS $\{p_{n}^{[2]}(x)\}_{n\geq 0}$ . Notice that from
\begin{equation*}
x\,\mathbf{\bar{p}}^{[2]}=\mathbf{J}_{[2]}\,\mathbf{\bar{p}}^{[2]},
\end{equation*}%
we get%
\begin{equation}
\left( x-c\right) ^{2}\mathbf{\bar{p}}^{[2]}=\left( \mathbf{J}_{[2]}-c%
\mathbf{I}\right) ^{2}\mathbf{\bar{p}}^{[2]}.  \label{Prop-04}
\end{equation}

\smallskip

Starting with $(\mathbf{J}-c\mathbf{I})$, and assuming $c$ is located in the
left hand side of supp$(\mu ) $, all their leading principal submatrices are
positive definite, so we get the following Cholesky factorization%
\begin{equation}
\mathbf{J}-c\mathbf{I=LL}^{\intercal }.  \label{Chol00}
\end{equation}

Here $\mathbf{L}$ is a lower bidiagonal matrix with positive diagonal
entries. From \cite{BM-LAA06}\ we know%
\begin{equation}
\mathbf{J}_{[1]}-c\mathbf{I=\mathbf{L}^{\intercal }\mathbf{L}=\mathbf{L}_{1}%
\mathbf{L}_{1}^{\intercal }},  \label{Chol01}
\end{equation}

where $\mathbf{L}_{1}$ is a lower bidiagonal matrix with positive diagonal
entries. Notice that if $c$ is located in the right hand side of the
support, then you must deal with the Cholesky factorization of the matrix $c%
\mathbf{I}- \mathbf{J}.$

Next, we show that the five diagonal matrix $\mathbf{H}$ associated with (%
\ref{[Sec3]-Coeff-5TRR-2})\ can be given in terms of the five diagonal
matrix $(\mathbf{J}_{[2]}-c\mathbf{I)}^{2}$. Combining (\ref{FormMatrizH})
with (\ref{Prop-Sobpert2}), we get%
\begin{equation}
\mathbf{T}(x-c)^{2}\mathbf{\bar{p}}^{[2]}=\mathbf{HT\,\bar{p}}^{[2]}.
\label{Prop-03}
\end{equation}%
Substituting (\ref{Prop-04}) into (\ref{Prop-03})%
\begin{equation*}
\mathbf{T}\left( \mathbf{J}_{[2]}-c\mathbf{I}\right) ^{2}\mathbf{\,\bar{p}}%
^{[2]}=\mathbf{HT\,\bar{p}}^{[2]}.
\end{equation*}%
Hence, we state the following



\begin{proposition}
\label{PROP02}The semi-infinite five diagonal matrix $\mathbf{H}$, can be
obtained from the matrix $\left( \mathbf{J}_{[2]}-c\mathbf{I}\right) ^{2}$
as follows%
\begin{equation*}
\mathbf{H}=\mathbf{T}\left( \mathbf{J}_{[2]}-c\mathbf{I}\right) ^{2}\mathbf{T%
}^{-1}.
\end{equation*}
\end{proposition}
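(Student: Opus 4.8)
The plan is to derive the identity directly from the two matrix relations already established in the excerpt, treating it as a pure linear-algebra consequence rather than proving anything new about the polynomials. The key observation is that equation (\ref{Prop-03}) combined with the substitution of (\ref{Prop-04}) yields
\begin{equation*}
\mathbf{T}\left( \mathbf{J}_{[2]}-c\mathbf{I}\right) ^{2}\mathbf{\bar{p}}^{[2]}=\mathbf{H}\,\mathbf{T}\,\mathbf{\bar{p}}^{[2]},
\end{equation*}
so the two semi-infinite operators $\mathbf{T}\left( \mathbf{J}_{[2]}-c\mathbf{I}\right)^{2}$ and $\mathbf{H}\,\mathbf{T}$ agree when applied to the column vector $\mathbf{\bar{p}}^{[2]}$ of orthonormal polynomials. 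First I would argue that this forces the two matrices to be \emph{equal} as semi-infinite matrices, not merely to agree on one vector: since $\{p_{n}^{[2]}(x)\}_{n\geq 0}$ is a basis of $\mathbb{P}$ with $\deg p_{n}^{[2]}=n$, the entries of $\mathbf{\bar{p}}^{[2]}$ are linearly independent polynomials, and any two banded (hence row-finite) semi-infinite matrices acting on such a basis-vector and producing the same result must coincide row by row. This is the step that requires a word of care, because equality of the form $A\mathbf{\bar{p}}^{[2]}=B\mathbf{\bar{p}}^{[2]}$ only implies $A=B$ when the action on the basis is well defined and the matrices are determined by their rows; both $\mathbf{H}$ and $\mathbf{T}$ are banded/lower-triangular, so each row has finitely many nonzero entries and the matrix products are well defined.

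Once equality of matrices is established, I would simply invert $\mathbf{T}$. The matrix $\mathbf{T}$ defined in (\ref{MatrixT}) is lower triangular, semi-infinite, with strictly positive diagonal entries $\gamma_{n,n}=t_{n}/r_{n}^{[2]}>0$, hence nonsingular with a well-defined lower-triangular inverse $\mathbf{T}^{-1}$ whose rows are again finitely supported. Multiplying
\begin{equation*}
\mathbf{T}\left( \mathbf{J}_{[2]}-c\mathbf{I}\right) ^{2}=\mathbf{H}\,\mathbf{T}
\end{equation*}
on the right by $\mathbf{T}^{-1}$ gives exactly the claimed factorization $\mathbf{H}=\mathbf{T}\left( \mathbf{J}_{[2]}-c\mathbf{I}\right)^{2}\mathbf{T}^{-1}$.

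The main obstacle, and the only genuine subtlety, is the associativity and well-definedness of these products in the semi-infinite setting: one must confirm that $\mathbf{T}$, $\mathbf{T}^{-1}$, and $(\mathbf{J}_{[2]}-c\mathbf{I})^{2}$ are all row-finite (equivalently, that $\mathbf{H}$ and $\mathbf{T}$ have finite bandwidth), so that the associative rearrangement $(\mathbf{H}\mathbf{T})\mathbf{T}^{-1}=\mathbf{H}(\mathbf{T}\mathbf{T}^{-1})=\mathbf{H}$ is legitimate and no issues of convergence or reordering arise. Here $\mathbf{H}$ is pentadiagonal by (\ref{MatrixH}), $\mathbf{J}_{[2]}$ is tridiagonal so $(\mathbf{J}_{[2]}-c\mathbf{I})^{2}$ is pentadiagonal, and $\mathbf{T}$ is lower triangular with bandwidth three by Theorem \ref{S2-THEOR-01} (only the coefficients $\gamma_{n,n},\gamma_{n-1,n},\gamma_{n-2,n}$ are nonzero). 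I would note explicitly that $\mathbf{T}^{-1}$, while lower triangular, need not be banded, but it is still row-finite, which is all that is needed to justify the cancellation. With these observations the proof reduces to the substitution displayed above followed by right-multiplication by $\mathbf{T}^{-1}$, and I would keep the write-up to just a few lines.
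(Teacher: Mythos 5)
Your proposal is correct and follows essentially the same route as the paper, which likewise combines (\ref{FormMatrizH}) with (\ref{Prop-Sobpert2}) and substitutes (\ref{Prop-04}) to obtain $\mathbf{T}\left(\mathbf{J}_{[2]}-c\mathbf{I}\right)^{2}\mathbf{\bar{p}}^{[2]}=\mathbf{H}\mathbf{T}\,\mathbf{\bar{p}}^{[2]}$ and then cancels $\mathbf{T}$ on the right. Your additional remarks on why agreement on the vector $\mathbf{\bar{p}}^{[2]}$ forces matrix equality, and on the row-finiteness needed to justify the semi-infinite products, make explicit points the paper leaves implicit.
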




Next, we repeat the above process commuting the order of factors in $\mathbf{%
L}_{1}\mathbf{L}_{1}^{\intercal }$, Thus%
\begin{equation}
\mathbf{L}_{1}^{\intercal }\mathbf{L}_{1}=\mathbf{J}_{[2]}-c\mathbf{I}.
\label{segunda}
\end{equation}%
From (\ref{Chol01}) we have $\mathbf{L}_{1}=\mathbf{L}^{\intercal }\mathbf{LL%
}_{1}^{-\intercal }$, and replacing this expression as above, it yields%
\begin{equation*}
\mathbf{J}_{[2]}-cI=\mathbf{L}_{1}^{\intercal }\left( \mathbf{L}^{\intercal }%
\mathbf{LL}_{1}^{-\intercal }\right) =\left( \mathbf{L}_{1}^{\intercal }%
\mathbf{L}^{\intercal }\right) \left( \mathbf{LL}_{1}^{-\intercal }\right)
=\left( \mathbf{LL}_{1}\right) ^{\intercal }\left( \mathbf{LL}%
_{1}^{-\intercal }\right) =\mathbf{RQ}.
\end{equation*}%
Notice that $\mathbf{R}=\left( \mathbf{LL}_{1}\right) ^{\intercal }$ is an
upper triangular matrix, with positive diagonal entries because $\mathbf{L}$
and $\mathbf{L}_{1}$ are lower bidiagonal matrices. Now for the matrix $%
\mathbf{Q}=\mathbf{LL}_{1}^{-\intercal }$ we have%
\begin{eqnarray*}
\mathbf{QQ}^{\intercal } &=&\mathbf{LL}_{1}^{-\intercal }\left( \mathbf{LL}%
_{1}^{-\intercal }\right) ^{\intercal }=\mathbf{LL}_{1}^{-\intercal }\left(
\mathbf{L}_{1}^{-1}\mathbf{L}^{\intercal }\right) \\
&=&\mathbf{L}\left( \mathbf{L}_{1}^{-\intercal }\mathbf{L}_{1}^{-1}\right)
\mathbf{L}^{\intercal }=\mathbf{L}\left( \mathbf{L}_{1}\mathbf{L}%
_{1}^{\intercal }\right) ^{-1}\mathbf{L}^{\intercal }.
\end{eqnarray*}%
Next, from (\ref{Chol01}) $\mathbf{L}_{1}\mathbf{L}_{1}^{\intercal }=\mathbf{%
L}^{\intercal }\mathbf{L}.$ Thus
\begin{equation*}
\mathbf{QQ}^{\intercal }=\mathbf{L}\left( \mathbf{L}^{\intercal }\mathbf{L}%
\right) ^{-1}\mathbf{L}^{\intercal }=\mathbf{LL}^{-1}\mathbf{L}^{-\intercal }%
\mathbf{L}^{\intercal }=\mathbf{I},
\end{equation*}%
as well as
\begin{equation*}
\mathbf{Q}^{\intercal }\mathbf{Q}=\left( \mathbf{L}_{1}^{-1}\mathbf{L}%
^{\intercal }\right) \left( \mathbf{L}\mathbf{L}_{1}^{-\intercal }\right) =%
\mathbf{L}_{1}^{-1}\left( \mathbf{L}^{\intercal }\mathbf{L}\right) \mathbf{L}%
_{1}^{-\intercal }=\mathbf{L}_{1}^{-1}\left( \mathbf{L}_{1}\mathbf{L}%
_{1}^{\intercal }\right) \mathbf{L}_{1}^{-\intercal }=\mathbf{I}.
\end{equation*}%
This means that $\mathbf{Q}$ is an orthogonal matrix. Thus, we have proved
the following



\begin{proposition}
\label{PROP03}The positive definite matrix $\mathbf{J}_{[2]}-c\mathbf{I}$
can be factorised as follows
\begin{equation}
\mathbf{J}_{[2]}-c\mathbf{I}=\mathbf{RQ},  \label{ChoProp1}
\end{equation}%
where $\mathbf{R}$ is an upper triangular matrix, and\ $\mathbf{Q}$ is an
orthogonal matrix, i. e. $\mathbf{QQ}^{\intercal }=\mathbf{Q}^{\intercal }%
\mathbf{Q}=\mathbf{I}$.
\end{proposition}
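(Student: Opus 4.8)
The plan is to produce the factors $\mathbf{R}$ and $\mathbf{Q}$ explicitly from the bidiagonal Cholesky factors $\mathbf{L}$ and $\mathbf{L}_{1}$ already available from (\ref{Chol00}) and (\ref{Chol01}), rather than invoking any abstract $\mathbf{RQ}$ existence theorem. The guiding idea is that commuting the two Cholesky factors of a shifted Jacobi matrix implements one Christoffel step: passing from $\mathbf{L}\mathbf{L}^{\intercal}$ to $\mathbf{L}^{\intercal}\mathbf{L}$ turns $\mathbf{J}-c\mathbf{I}$ into $\mathbf{J}_{[1]}-c\mathbf{I}$, and a second commutation turns $\mathbf{L}_{1}\mathbf{L}_{1}^{\intercal}$ into $\mathbf{L}_{1}^{\intercal}\mathbf{L}_{1}=\mathbf{J}_{[2]}-c\mathbf{I}$, which is exactly (\ref{segunda}). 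So I would take (\ref{segunda}) as the expression to be factorised.

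First I would eliminate $\mathbf{L}_{1}$ in favour of $\mathbf{L}$. Reading (\ref{Chol01}) as $\mathbf{L}^{\intercal}\mathbf{L}=\mathbf{L}_{1}\mathbf{L}_{1}^{\intercal}$ and right-multiplying by $\mathbf{L}_{1}^{-\intercal}$ gives $\mathbf{L}_{1}=\mathbf{L}^{\intercal}\mathbf{L}\mathbf{L}_{1}^{-\intercal}$. Substituting this into (\ref{segunda}) and regrouping the four factors yields $\mathbf{J}_{[2]}-c\mathbf{I}=\mathbf{L}_{1}^{\intercal}(\mathbf{L}^{\intercal}\mathbf{L}\mathbf{L}_{1}^{-\intercal})=(\mathbf{L}\mathbf{L}_{1})^{\intercal}(\mathbf{L}\mathbf{L}_{1}^{-\intercal})$. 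This suggests setting $\mathbf{R}=(\mathbf{L}\mathbf{L}_{1})^{\intercal}$ and $\mathbf{Q}=\mathbf{L}\mathbf{L}_{1}^{-\intercal}$, so that $\mathbf{J}_{[2]}-c\mathbf{I}=\mathbf{R}\mathbf{Q}$ is exactly (\ref{ChoProp1}).

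It then remains to check the two structural claims. Since $\mathbf{L}$ and $\mathbf{L}_{1}$ are lower bidiagonal with positive diagonals, their product $\mathbf{L}\mathbf{L}_{1}$ is lower triangular with positive diagonal, hence $\mathbf{R}=(\mathbf{L}\mathbf{L}_{1})^{\intercal}$ is upper triangular with positive diagonal. For orthogonality of $\mathbf{Q}$ I would compute the two Gram products and collapse them with (\ref{Chol01}): $\mathbf{Q}\mathbf{Q}^{\intercal}=\mathbf{L}(\mathbf{L}_{1}\mathbf{L}_{1}^{\intercal})^{-1}\mathbf{L}^{\intercal}=\mathbf{L}(\mathbf{L}^{\intercal}\mathbf{L})^{-1}\mathbf{L}^{\intercal}=\mathbf{I}$ and, symmetrically, $\mathbf{Q}^{\intercal}\mathbf{Q}=\mathbf{L}_{1}^{-1}(\mathbf{L}^{\intercal}\mathbf{L})\mathbf{L}_{1}^{-\intercal}=\mathbf{L}_{1}^{-1}(\mathbf{L}_{1}\mathbf{L}_{1}^{\intercal})\mathbf{L}_{1}^{-\intercal}=\mathbf{I}$, using $(\mathbf{L}^{\intercal}\mathbf{L})^{-1}=\mathbf{L}^{-1}\mathbf{L}^{-\intercal}$ and $\mathbf{L}\mathbf{L}^{-1}=\mathbf{I}$.

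The main point to be careful about is the legitimacy of these inverse and cancellation manipulations in the semi-infinite setting. I would first record that, because $c$ lies to the left of $\mathrm{supp}(\mu)$, all leading principal submatrices of $\mathbf{J}-c\mathbf{I}$ and of $\mathbf{J}_{[1]}-c\mathbf{I}$ are positive definite, which is what guarantees the bidiagonal Cholesky factors $\mathbf{L}$, $\mathbf{L}_{1}$ with strictly positive diagonals in the first place, and which also forces $\mathbf{J}_{[2]}-c\mathbf{I}$ to be positive definite so that the factorisation is non-degenerate. A lower (or upper) triangular semi-infinite matrix with nonzero diagonal entries admits a genuine triangular two-sided inverse, computed band by band, so $\mathbf{L}^{-1}$, $\mathbf{L}_{1}^{-\intercal}$ and the identities $\mathbf{L}\mathbf{L}^{-1}=\mathbf{L}^{-1}\mathbf{L}=\mathbf{I}$ are valid, and every product above is banded, so associativity and the regroupings hold entrywise. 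The only genuine input beyond this bookkeeping is the factor-commutation fact $\mathbf{J}_{[1]}-c\mathbf{I}=\mathbf{L}^{\intercal}\mathbf{L}$ from \cite{BM-LAA06} encoded in (\ref{Chol01}); everything else is the algebra displayed above.
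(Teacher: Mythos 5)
Your proposal is correct and follows essentially the same route as the paper: substituting $\mathbf{L}_{1}=\mathbf{L}^{\intercal}\mathbf{L}\mathbf{L}_{1}^{-\intercal}$ into (\ref{segunda}), regrouping to identify $\mathbf{R}=(\mathbf{L}\mathbf{L}_{1})^{\intercal}$ and $\mathbf{Q}=\mathbf{L}\mathbf{L}_{1}^{-\intercal}$, and verifying both Gram products via (\ref{Chol01}). Your added remarks on the validity of triangular inversion and associativity for banded semi-infinite matrices are a welcome precision the paper leaves implicit.
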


Notice that the above result has been also proved in\cite{Ga02} but the fact
that also $\mathbf{QQ}^{\intercal }=\mathbf{I}$ holds is not proved.



Taking into account the previous result, we come back to (\ref{Chol00}) to
observe%
\begin{equation*}
\mathbf{J}-c\mathbf{I}=\mathbf{LL}^{\intercal }=\mathbf{L\left( \mathbf{L}%
_{1}^{-\intercal }\mathbf{L}_{1}^{\intercal }\right) L}^{\intercal }=\left(
\mathbf{L\mathbf{L}_{1}^{-\intercal }}\right) \left( \mathbf{\mathbf{L}%
_{1}^{\intercal }L}^{\intercal }\right) =\left( \mathbf{L\mathbf{L}%
_{1}^{-\intercal }}\right) \left( \mathbf{L\mathbf{L}_{1}}\right)
^{\intercal }=\mathbf{QR}.
\end{equation*}%
Thus, we can summarize the above as follows



\begin{proposition}
\label{PROP04}Let $\mathbf{J}$ be the symmetric Jacobi matrix such that
\begin{equation*}
x\,\mathbf{\bar{p}}=\mathbf{J}\,\mathbf{\bar{p}}.
\end{equation*}%
If $\mathbf{\bar{p}=}[p_{0}(x),p_{1}(x),p_{2}(x),\ldots ]^{\intercal }$ is
the infinite vector associated with the orthonormal polynomial sequence with
respect to $d\mu $ and we assume $p_{n}(c)\neq 0$ for $n\geq 1$, then the
following factorization%
\begin{equation*}
\mathbf{J}-c\mathbf{I}=\mathbf{QR}
\end{equation*}%
holds, Here $\mathbf{R}$ is an upper triangular matrix, and\ $\mathbf{Q}$ is
an orthogonal matrix , i.e. $\mathbf{QQ}^{\intercal }=\mathbf{Q}^{\intercal }%
\mathbf{Q}=\mathbf{I}$. Under these conditions,%
\begin{equation*}
\mathbf{RQ}=\mathbf{J}_{[2]}-c\mathbf{I},
\end{equation*}%
where $\mathbf{J}_{[2]}$ is the symmetric Jacobi matrix such that $x\,%
\mathbf{\bar{p}}^{[2]}=\mathbf{J}^{[2]}\,\mathbf{\bar{p}}^{[2]}$, where $%
\mathbf{\bar{p}}^{[2]}$\ is the infinite vector associated with the
orthonormal polynomial sequence with respect to $(x-c)^{2}d\mu $.
\end{proposition}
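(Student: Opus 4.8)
The plan is to assemble the statement from the Cholesky machinery already developed for $\mathbf{J}-c\mathbf{I}$ together with the commutation identities relating successive Christoffel iterates. Since Proposition~\ref{PROP03} already supplies the $\mathbf{RQ}$ half, the genuine content is the $\mathbf{QR}$ half and the identification of the two factors with the ones produced there.

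First I would recall the Cholesky factorization (\ref{Chol00}), $\mathbf{J}-c\mathbf{I}=\mathbf{L}\mathbf{L}^{\intercal}$, which is legitimate because, with $c$ to the left of $\mathrm{supp}(\mu)$, every leading principal submatrix of $\mathbf{J}-c\mathbf{I}$ is positive definite; the hypothesis $p_{n}(c)\neq 0$ guarantees that the relevant lower bidiagonal factors have nonvanishing diagonal entries, so that all the inverses appearing below exist. Next I would invoke the commutation identities (\ref{Chol01}) and (\ref{segunda}): commuting the Cholesky factors of $\mathbf{J}-c\mathbf{I}$ yields $\mathbf{L}^{\intercal}\mathbf{L}=\mathbf{J}_{[1]}-c\mathbf{I}=\mathbf{L}_{1}\mathbf{L}_{1}^{\intercal}$, and commuting once more gives $\mathbf{L}_{1}^{\intercal}\mathbf{L}_{1}=\mathbf{J}_{[2]}-c\mathbf{I}$.

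The crucial step is to insert the identity $\mathbf{I}=\mathbf{L}_{1}^{-\intercal}\mathbf{L}_{1}^{\intercal}$ into (\ref{Chol00}) and regroup,
\begin{equation*}
\mathbf{J}-c\mathbf{I}=\mathbf{L}\left(\mathbf{L}_{1}^{-\intercal}\mathbf{L}_{1}^{\intercal}\right)\mathbf{L}^{\intercal}=\left(\mathbf{L}\mathbf{L}_{1}^{-\intercal}\right)\left(\mathbf{L}\mathbf{L}_{1}\right)^{\intercal}=\mathbf{Q}\mathbf{R},
\end{equation*}
with $\mathbf{Q}=\mathbf{L}\mathbf{L}_{1}^{-\intercal}$ and $\mathbf{R}=\left(\mathbf{L}\mathbf{L}_{1}\right)^{\intercal}$. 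Here $\mathbf{R}$ is upper triangular with positive diagonal because $\mathbf{L}$ and $\mathbf{L}_{1}$ are lower bidiagonal with positive diagonal, and $\mathbf{Q}$ is precisely the matrix shown to be orthogonal in Proposition~\ref{PROP03}, where the Cholesky identity $\mathbf{L}_{1}\mathbf{L}_{1}^{\intercal}=\mathbf{L}^{\intercal}\mathbf{L}$ yields both $\mathbf{Q}\mathbf{Q}^{\intercal}=\mathbf{I}$ and $\mathbf{Q}^{\intercal}\mathbf{Q}=\mathbf{I}$. The companion relation $\mathbf{R}\mathbf{Q}=\mathbf{J}_{[2]}-c\mathbf{I}$ is then immediate from (\ref{ChoProp1}), or directly from $\mathbf{R}\mathbf{Q}=\mathbf{L}_{1}^{\intercal}\mathbf{L}^{\intercal}\mathbf{L}\mathbf{L}_{1}^{-\intercal}=\mathbf{L}_{1}^{\intercal}\left(\mathbf{L}_{1}\mathbf{L}_{1}^{\intercal}\right)\mathbf{L}_{1}^{-\intercal}=\mathbf{L}_{1}^{\intercal}\mathbf{L}_{1}$.

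The main obstacle I anticipate is not the algebra but the semi-infinite functional-analytic subtlety: for a semi-infinite matrix a one-sided inverse need not be two-sided, so establishing $\mathbf{Q}\mathbf{Q}^{\intercal}=\mathbf{I}$ in addition to $\mathbf{Q}^{\intercal}\mathbf{Q}=\mathbf{I}$ is the delicate point, precisely the gap in \cite{Ga02} noted after Proposition~\ref{PROP03}. The cleanest route is to keep everything expressed through the bidiagonal factors $\mathbf{L}$ and $\mathbf{L}_{1}$, whose invertibility is controlled by the hypothesis $p_{n}(c)\neq 0$, and to let the Cholesky identity $\mathbf{L}^{\intercal}\mathbf{L}=\mathbf{L}_{1}\mathbf{L}_{1}^{\intercal}$ do all the work, so that both orthogonality relations reduce to telescoping cancellations of $\mathbf{L}$ against $\mathbf{L}^{-1}$ rather than to an abstract appeal to orthogonality.
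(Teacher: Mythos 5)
Your argument is correct and coincides with the paper's own proof: the paper likewise inserts $\mathbf{I}=\mathbf{L}_{1}^{-\intercal}\mathbf{L}_{1}^{\intercal}$ into the Cholesky factorization $\mathbf{J}-c\mathbf{I}=\mathbf{L}\mathbf{L}^{\intercal}$, regroups to obtain $\mathbf{Q}=\mathbf{L}\mathbf{L}_{1}^{-\intercal}$ and $\mathbf{R}=\left(\mathbf{L}\mathbf{L}_{1}\right)^{\intercal}$, and imports both the orthogonality of $\mathbf{Q}$ and the identity $\mathbf{R}\mathbf{Q}=\mathbf{J}_{[2]}-c\mathbf{I}$ from Proposition~\ref{PROP03}. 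Your closing remark about the two-sided orthogonality being the delicate point for semi-infinite matrices matches the observation the paper makes after Proposition~\ref{PROP03} concerning \cite{Ga02}.
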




Observe that this is an alternative proof of Theorem 3.3 in \cite{BI-JCAM92}.

\smallskip

Since $\mathbf{J}_{[2]}$ is a symmetric matrix, from (\ref{ChoProp1}) and $%
\mathbf{QQ}^{\intercal }=\mathbf{I}$, we easily observe%
\begin{equation*}
\left( \mathbf{J}_{[2]}-c\mathbf{I}\right) ^{2}=\left( \mathbf{J}_{[2]}-c%
\mathbf{I}\right) \left( \mathbf{J}_{[2]}-c\mathbf{I}\right) ^{\intercal }=%
\mathbf{RQQ}^{\intercal }\mathbf{R}^{\intercal }=\mathbf{RR}^{\intercal }.
\end{equation*}%
Thus



\begin{proposition}
\label{PROP05}The square of the positive definite symmetric matrix $\mathbf{J%
}_{[2]}-c\mathbf{I}$ has the following factorization%
\begin{equation}
\left( \mathbf{J}_{[2]}-c\mathbf{I}\right) ^{2}=\mathbf{RR}^{\intercal },
\label{J2-cIeqRRt}
\end{equation}%
where $\mathbf{R}$ is an upper triangular matrix. Furthermore
\begin{equation*}
\left( \mathbf{J}-c\mathbf{I}\right) ^{2}=\mathbf{R}^{\intercal }\mathbf{R}.
\end{equation*}
\end{proposition}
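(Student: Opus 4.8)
The plan is to exploit the two factorizations already established in Propositions \ref{PROP03} and \ref{PROP04}, together with the symmetry of the Jacobi matrices and the orthogonality of $\mathbf{Q}$. The first identity in fact already appears in the computation immediately preceding the statement, and I would simply record it cleanly. Since $\mathbf{J}_{[2]}$ is symmetric, so is $\mathbf{J}_{[2]}-c\mathbf{I}$, whence
\begin{equation*}
\left( \mathbf{J}_{[2]}-c\mathbf{I}\right)^{2}=\left( \mathbf{J}_{[2]}-c\mathbf{I}\right)\left( \mathbf{J}_{[2]}-c\mathbf{I}\right)^{\intercal}.
\end{equation*}
Substituting the factorization $\mathbf{J}_{[2]}-c\mathbf{I}=\mathbf{RQ}$ from (\ref{ChoProp1}) and using $\mathbf{QQ}^{\intercal}=\mathbf{I}$ gives $\mathbf{RQ}\mathbf{Q}^{\intercal}\mathbf{R}^{\intercal}=\mathbf{RR}^{\intercal}$, which is precisely (\ref{J2-cIeqRRt}).

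For the second identity I would argue in the mirror-image fashion, now using the $\mathbf{QR}$ factorization of $\mathbf{J}-c\mathbf{I}$ from Proposition \ref{PROP04}. The matrix $\mathbf{J}-c\mathbf{I}$ is again symmetric, so
\begin{equation*}
\left( \mathbf{J}-c\mathbf{I}\right)^{2}=\left( \mathbf{J}-c\mathbf{I}\right)^{\intercal}\left( \mathbf{J}-c\mathbf{I}\right).
\end{equation*}
Replacing $\mathbf{J}-c\mathbf{I}=\mathbf{QR}$ and invoking the other orthogonality relation $\mathbf{Q}^{\intercal}\mathbf{Q}=\mathbf{I}$ yields $\mathbf{R}^{\intercal}\mathbf{Q}^{\intercal}\mathbf{Q}\mathbf{R}=\mathbf{R}^{\intercal}\mathbf{R}$, as claimed.

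The only point requiring care --- and really the conceptual content of the statement --- is that the \emph{same} pair $(\mathbf{R},\mathbf{Q})$ serves both factorizations: Proposition \ref{PROP03} produces $\mathbf{J}_{[2]}-c\mathbf{I}=\mathbf{RQ}$ while Proposition \ref{PROP04} produces $\mathbf{J}-c\mathbf{I}=\mathbf{QR}$ out of the identical building blocks $\mathbf{R}=(\mathbf{L}\mathbf{L}_{1})^{\intercal}$ and $\mathbf{Q}=\mathbf{L}\mathbf{L}_{1}^{-\intercal}$. Consequently the single upper-triangular matrix $\mathbf{R}$ simultaneously Cholesky-factorizes both squares, sitting on the left for $(\mathbf{J}_{[2]}-c\mathbf{I})^{2}$ and on the right for $(\mathbf{J}-c\mathbf{I})^{2}$. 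No genuine obstacle arises here; the verification is a one-line manipulation in each case, and the essential work was already expended in establishing the two-sided orthogonality $\mathbf{QQ}^{\intercal}=\mathbf{Q}^{\intercal}\mathbf{Q}=\mathbf{I}$ inside Proposition \ref{PROP03}, which is exactly what makes both directions go through.
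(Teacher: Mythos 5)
Your proof is correct and follows exactly the paper's own route: the first identity is obtained, as in the text immediately preceding Proposition \ref{PROP05}, by writing $\left(\mathbf{J}_{[2]}-c\mathbf{I}\right)^{2}=\left(\mathbf{J}_{[2]}-c\mathbf{I}\right)\left(\mathbf{J}_{[2]}-c\mathbf{I}\right)^{\intercal}=\mathbf{RQQ}^{\intercal}\mathbf{R}^{\intercal}=\mathbf{RR}^{\intercal}$, and the second by the mirror computation with $\mathbf{J}-c\mathbf{I}=\mathbf{QR}$ and $\mathbf{Q}^{\intercal}\mathbf{Q}=\mathbf{I}$. Your closing remark that the same pair $(\mathbf{R},\mathbf{Q})$ built from $\mathbf{L}$ and $\mathbf{L}_{1}$ serves both factorizations is precisely the point the paper relies on.
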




Next, we are ready to prove that there is a very close relation between the
five diagonal semi-infinite symmetric matrix $\mathbf{H}$ defined in (\ref%
{MatrixH}), and the lower triangular, semi-infinite, nonsingular matrix $%
\mathbf{T}$ defined in (\ref{MatrixT}).

We will use the following notation. Let $\mathbf{\bar{f}}$ be any
semi-infinite column vector with polynomial entries $\mathbf{\bar{f}}%
=[f_{0}(x),f_{1}(x),f_{2}(x),\ldots ]^{\intercal }$. Then $\langle \mathbf{%
\bar{f}},\mathbf{\bar{g}}\rangle $ will represent the given inner product of
$\mathbf{\bar{f}}$ and $\mathbf{\bar{g}}$ componentwise, that is, we get the
following semi-infinite square matrix
\begin{equation*}
\langle \mathbf{\bar{f}},\mathbf{\bar{g}}\rangle =%
\begin{bmatrix}
\langle f_{0},g_{0}\rangle & \langle f_{0},g_{1}\rangle & \langle
f_{0},g_{2}\rangle & \cdots \\
\langle f_{1},g_{0}\rangle & \langle f_{1},g_{1}\rangle & \langle
f_{1},g_{2}\rangle & \cdots \\
\langle f_{2},g_{0}\rangle & \langle f_{2},g_{1}\rangle & \langle
f_{2},g_{2}\rangle & \cdots \\
\vdots & \vdots & \vdots & \ddots%
\end{bmatrix}%
.
\end{equation*}%
Next, let us recall (\ref{Prop-Sobpert2}), i. e. $\mathbf{\bar{s}}^{M,N}=%
\mathbf{T\,\bar{p}}^{[2]}.$ Let us consider the inner product%
\begin{equation*}
\langle \mathbf{\bar{s}}^{M,N},\mathbf{\bar{s}}^{M,N}\rangle _{\lbrack
2]}=\langle \mathbf{T\,\bar{p}}^{[2]},\mathbf{T\,\bar{p}}^{[2]}\rangle
_{\lbrack 2]}=\mathbf{T}\langle \mathbf{\bar{p}}^{[2]},\mathbf{\bar{p}}%
^{[2]}\rangle _{\lbrack 2]}\mathbf{T}^{\intercal }=\mathbf{T}\mathbf{T}%
^{\intercal },
\end{equation*}%
where $\langle \mathbf{\bar{p}}^{[2]},\mathbf{\bar{p}}^{[2]}\rangle
_{\lbrack 2]}=\mathbf{I}$ because we deal with orthonormal polynomials. On
the other hand, from (\ref{S1-DicrSob-Inn}) and (\ref{FormMatrizH}), one has%
\begin{equation*}
\langle \mathbf{\bar{s}}^{M,N},\mathbf{\bar{s}}^{M,N}\rangle _{\lbrack
2]}=\langle (x-c)^{2}\mathbf{\bar{s}}^{M,N},\mathbf{\bar{s}}^{M,N}\rangle
_{S}=\langle \mathbf{H\,\bar{s}}^{M,N},\mathbf{\bar{s}}^{M,N}\rangle _{S}=%
\mathbf{H\,}\langle \mathbf{\bar{s}}^{M,N},\mathbf{\bar{s}}^{M,N}\rangle
_{S}=\mathbf{H},
\end{equation*}%
where again $\langle \mathbf{\bar{s}}^{M,N},\mathbf{\bar{s}}^{M,N}\rangle
_{S}=\mathbf{I}$ since we deal with orthonormal polynomials. Thus, we have
proved the following



\begin{proposition}
\label{PROP06}The five diagonal semi-infinite symmetric matrix $\mathbf{H}$
defined in (\ref{MatrixH}), has the following Cholesky factorization%
\begin{equation*}
\mathbf{H}=\mathbf{TT}^{\intercal },
\end{equation*}%
where $\mathbf{T}$ is the lower triangular, semi-infinite matrix defined in (%
\ref{MatrixT}).
\end{proposition}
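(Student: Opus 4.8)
The plan is to compute the matrix $\langle \mathbf{\bar{s}}^{M,N},\mathbf{\bar{s}}^{M,N}\rangle_{[2]}$ in two different ways and match the results, since the equality $\mathbf{H}=\mathbf{T}\mathbf{T}^{\intercal}$ will drop out by comparing the two evaluations. The symmetric factorization structure strongly suggests that the cleanest route is not to verify the entry-by-entry identity $c_n = \sum_j \gamma_{j,n}^2$, $b_n = \sum_j \gamma_{j,n}\gamma_{j,n-1}$, $a_n = \gamma_{n-2,n-2}\gamma_{n-2,n}$ directly against the coefficient formulas from Section 5, but rather to exploit the two representations $\mathbf{\bar{s}}^{M,N}=\mathbf{T}\,\mathbf{\bar{p}}^{[2]}$ and $(x-c)^2\mathbf{\bar{s}}^{M,N}=\mathbf{H}\,\mathbf{\bar{s}}^{M,N}$ together.

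First I would write the starting point $\mathbf{\bar{s}}^{M,N}=\mathbf{T}\,\mathbf{\bar{p}}^{[2]}$ from (\ref{Prop-Sobpert2}) and evaluate the Gram matrix in the $[2]$ inner product:
\begin{equation*}
\langle \mathbf{\bar{s}}^{M,N},\mathbf{\bar{s}}^{M,N}\rangle_{[2]}
=\mathbf{T}\,\langle \mathbf{\bar{p}}^{[2]},\mathbf{\bar{p}}^{[2]}\rangle_{[2]}\,\mathbf{T}^{\intercal}
=\mathbf{T}\mathbf{T}^{\intercal},
\end{equation*}
using that $\langle \mathbf{\bar{p}}^{[2]},\mathbf{\bar{p}}^{[2]}\rangle_{[2]}=\mathbf{I}$ because the $\{p_n^{[2]}\}$ are orthonormal with respect to $\langle\cdot,\cdot\rangle_{[2]}$. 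The factor $\mathbf{T}$ is extracted on the left and $\mathbf{T}^{\intercal}$ on the right because, in the componentwise bilinear bracket, the scalar coefficients can be pulled out of each entry as matrix multiplication.

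Second, I would compute the same Gram matrix a different way, invoking Theorem \ref{S2-THEOR-02}: since multiplication by $(x-c)^2$ is symmetric for $\langle\cdot,\cdot\rangle_S$, and crucially since the key property (\ref{[Sec3]-Property1}) identifies $\langle u,v\rangle_{[2]}$ with $\langle (x-c)^2 u, v\rangle_S$ for the relevant polynomials, one gets
\begin{equation*}
\langle \mathbf{\bar{s}}^{M,N},\mathbf{\bar{s}}^{M,N}\rangle_{[2]}
=\langle (x-c)^2\mathbf{\bar{s}}^{M,N},\mathbf{\bar{s}}^{M,N}\rangle_{S}
=\langle \mathbf{H}\,\mathbf{\bar{s}}^{M,N},\mathbf{\bar{s}}^{M,N}\rangle_{S}
=\mathbf{H}\,\langle \mathbf{\bar{s}}^{M,N},\mathbf{\bar{s}}^{M,N}\rangle_{S}
=\mathbf{H},
\end{equation*}
where the third equality substitutes the matrix form (\ref{FormMatrizH}) of the five-term recurrence, and the final step uses $\langle \mathbf{\bar{s}}^{M,N},\mathbf{\bar{s}}^{M,N}\rangle_S=\mathbf{I}$ by orthonormality of the Sobolev-type family. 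Equating the two computations yields $\mathbf{H}=\mathbf{T}\mathbf{T}^{\intercal}$ at once.

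The main obstacle, and the point deserving the most care, is the justification that pulling the matrix $\mathbf{H}$ (and likewise $\mathbf{T}$) out of the componentwise bracket $\langle\cdot,\cdot\rangle$ is legitimate for these semi-infinite objects — that is, that the formal manipulation $\langle \mathbf{H}\,\mathbf{\bar{s}},\mathbf{\bar{s}}\rangle_S=\mathbf{H}\langle \mathbf{\bar{s}},\mathbf{\bar{s}}\rangle_S$ is valid and that no convergence or associativity issue arises from the infinite banded structure. Because $\mathbf{H}$ is banded (only five diagonals) each entry of the product is a finite sum, so the interchange is safe; I would note this explicitly. A secondary subtlety is confirming that (\ref{[Sec3]-Property1}) legitimately applies entrywise to \emph{all} pairs $s_n^{M,N},s_k^{M,N}$ and not merely to the near-diagonal band — but since $(x-c)^2 s_n^{M,N}$ vanishes to second order at $c$, the boundary terms $M f(c)g(c)+Nf'(c)g'(c)$ in $\langle\cdot,\cdot\rangle_S$ drop out uniformly, so the identity holds for every entry. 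That the resulting $\mathbf{T}\mathbf{T}^{\intercal}$ is genuinely a Cholesky factorization then follows because $\mathbf{T}$ is lower triangular with positive diagonal entries $\gamma_{n,n}=t_n/r_n^{[2]}>0$, as recorded in (\ref{MatrixT}) and Theorem \ref{S2-THEOR-01}.
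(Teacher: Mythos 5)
Your argument is correct and coincides with the paper's own proof: both evaluate the Gram matrix $\langle \mathbf{\bar{s}}^{M,N},\mathbf{\bar{s}}^{M,N}\rangle _{\lbrack 2]}$ once via $\mathbf{\bar{s}}^{M,N}=\mathbf{T}\,\mathbf{\bar{p}}^{[2]}$ to obtain $\mathbf{T}\mathbf{T}^{\intercal }$, and once via the symmetry of multiplication by $(x-c)^{2}$ together with (\ref{FormMatrizH}) to obtain $\mathbf{H}$. Your added remarks on the bandedness of $\mathbf{H}$ and the positivity of the diagonal of $\mathbf{T}$ are sound but not part of the paper's argument.
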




Finally, from (\ref{Prop-Sobpert2}) and (\ref{FormMatrizH}), we have%
\begin{equation*}
(x-c)^{2}\mathbf{\bar{s}}^{M,N}=\mathbf{H\,\bar{s}}^{M,N}=(x-c)^{2}\mathbf{%
T\,\bar{p}}^{[2]}=\mathbf{T}(x-c)^{2}\mathbf{\bar{p}}^{[2]}.
\end{equation*}%
According to (\ref{Prop-04}), we get%
\begin{equation*}
\mathbf{T}(x-c)^{2}\mathbf{\bar{p}}^{[2]}=\mathbf{T}\left( \mathbf{J}_{[2]}-c%
\mathbf{I}\right) ^{2}\mathbf{\bar{p}}^{[2]}.
\end{equation*}%
Next, from (\ref{J2-cIeqRRt}) we obtain
\begin{equation*}
\mathbf{T}\left( \mathbf{J}_{[2]}-c\mathbf{I}\right) ^{2}\mathbf{\bar{p}}%
^{[2]}=\mathbf{T\mathbf{RR}^{\intercal }\bar{p}}^{[2]}=\mathbf{H\,\bar{s}}%
^{M,N}=\mathbf{TT}^{\intercal }\mathbf{T\,\bar{p}}^{[2]}.
\end{equation*}%
Therefore,%
\begin{equation*}
\mathbf{T\mathbf{RR}^{\intercal }\bar{p}}^{[2]}=\mathbf{TT}^{\intercal }%
\mathbf{T\,\bar{p}}^{[2]}
\end{equation*}%
and, as a consequence,%
\begin{equation*}
\mathbf{\mathbf{RR}^{\intercal }}=\mathbf{T}^{\intercal }\mathbf{T}.
\end{equation*}



\begin{proposition}
\label{PROP07}For any positive Borel measure $d\mu $ supported on $%
E\subseteq \mathbb{R}$, if $\mathbf{J}$ is the corresponding semi-infinite
symmetric Jacobi matrix and $c\notin E,$ then for the $2-$iterated perturbed
measure $(x-c)^{2}d\mu $ such that if $\mathbf{J}_{[2]}$\ is the
corresponding semi-infinite symmetric Jacobi matrix we get%
\begin{equation*}
\left( \mathbf{J}_{[2]}-c\mathbf{I}\right) ^{2}=\mathbf{RR}^{\intercal }=%
\mathbf{T}^{\intercal }\mathbf{T}.
\end{equation*}
\end{proposition}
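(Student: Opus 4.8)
The plan is to read the statement as two separate equalities and handle them by assembling the matrix identities already established in this section. The first equality, $\left(\mathbf{J}_{[2]}-c\mathbf{I}\right)^{2}=\mathbf{RR}^{\intercal}$, is nothing but the content of Proposition~\ref{PROP05} (equation~(\ref{J2-cIeqRRt})), itself obtained from the factorization $\mathbf{J}_{[2]}-c\mathbf{I}=\mathbf{RQ}$ of Proposition~\ref{PROP03} together with $\mathbf{QQ}^{\intercal}=\mathbf{I}$ and the symmetry of $\mathbf{J}_{[2]}$. So essentially all the work goes into the second equality $\mathbf{RR}^{\intercal}=\mathbf{T}^{\intercal}\mathbf{T}$, and my plan is to derive it by evaluating the multiplication operator by $(x-c)^{2}$ on the Sobolev-type basis in two different ways and then comparing.

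Concretely, I would start from the matrix form of the five term recurrence~(\ref{FormMatrizH}), namely $(x-c)^{2}\mathbf{\bar{s}}^{M,N}=\mathbf{H\,\bar{s}}^{M,N}$, and rewrite both sides using the connection relation~(\ref{Prop-Sobpert2}), $\mathbf{\bar{s}}^{M,N}=\mathbf{T\,\bar{p}}^{[2]}$. Pushing the scalar factor $(x-c)^{2}$ through the constant matrix $\mathbf{T}$ and invoking~(\ref{Prop-04}), $(x-c)^{2}\mathbf{\bar{p}}^{[2]}=\left(\mathbf{J}_{[2]}-c\mathbf{I}\right)^{2}\mathbf{\bar{p}}^{[2]}$, I obtain $\mathbf{H\,T\,\bar{p}}^{[2]}=\mathbf{T}\left(\mathbf{J}_{[2]}-c\mathbf{I}\right)^{2}\mathbf{\bar{p}}^{[2]}$. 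Next I substitute the two factorizations just proved: the first equality of the proposition, $\left(\mathbf{J}_{[2]}-c\mathbf{I}\right)^{2}=\mathbf{RR}^{\intercal}$ from~(\ref{J2-cIeqRRt}), on the right, and the Cholesky factorization of the five diagonal matrix, $\mathbf{H}=\mathbf{TT}^{\intercal}$ from Proposition~\ref{PROP06}, on the left. This yields $\mathbf{TT}^{\intercal}\mathbf{T\,\bar{p}}^{[2]}=\mathbf{T}\,\mathbf{RR}^{\intercal}\mathbf{\bar{p}}^{[2]}$.

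The closing step, where the only genuine subtlety lies, is to strip off the polynomial vector and cancel the triangular factor. Since the entries of $\mathbf{\bar{p}}^{[2]}$ are linearly independent (they form a basis of $\mathbb{P}$), equality of two semi-infinite matrices applied to $\mathbf{\bar{p}}^{[2]}$ forces the matrices to coincide, so $\mathbf{TT}^{\intercal}\mathbf{T}=\mathbf{T}\,\mathbf{RR}^{\intercal}$. Then I would use that $\mathbf{T}$ is lower triangular with strictly positive diagonal, hence invertible with lower-triangular inverse, and multiply on the left by $\mathbf{T}^{-1}$ to get $\mathbf{T}^{\intercal}\mathbf{T}=\mathbf{RR}^{\intercal}$. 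I expect the main point to watch is exactly the legitimacy of these two cancellations in the semi-infinite setting: one must check that the banded triangular structure of $\mathbf{T}$ and $\mathbf{R}$ makes all products well defined columnwise and that left multiplication by $\mathbf{T}^{-1}$ is meaningful, which it is precisely because $\mathbf{T}^{-1}$ is again lower triangular and each of its entries is determined by a finite computation. Once this bookkeeping is settled, chaining the two equalities delivers $\left(\mathbf{J}_{[2]}-c\mathbf{I}\right)^{2}=\mathbf{RR}^{\intercal}=\mathbf{T}^{\intercal}\mathbf{T}$, as claimed.
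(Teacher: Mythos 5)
Your proposal is correct and follows essentially the same route as the paper: combining $(x-c)^{2}\mathbf{\bar{s}}^{M,N}=\mathbf{H\,\bar{s}}^{M,N}$ with $\mathbf{\bar{s}}^{M,N}=\mathbf{T\,\bar{p}}^{[2]}$, $(x-c)^{2}\mathbf{\bar{p}}^{[2]}=(\mathbf{J}_{[2]}-c\mathbf{I})^{2}\mathbf{\bar{p}}^{[2]}$, $\mathbf{H}=\mathbf{TT}^{\intercal}$ and $(\mathbf{J}_{[2]}-c\mathbf{I})^{2}=\mathbf{RR}^{\intercal}$, then cancelling $\mathbf{\bar{p}}^{[2]}$ and the factor $\mathbf{T}$. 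Your extra care about why the cancellations are legitimate (linear independence of the polynomial entries and the triangular invertibility of $\mathbf{T}$) is a welcome refinement of what the paper leaves implicit.
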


This is the symmetric version of Theorem 5.3 in \cite{DM-NA14} , where the
authors use other kind of factorization based on monic orthogonal
polynomials.



\section{An example with Laguerre polynomials}

\label{Sec7-ExampleLaguerre}



In \cite{HPMQ-subm13} and Section \ref{Sec5-5TRR}\ the coefficients of (\ref%
{[Sec3]-Coeff-5TRR-2}) for the monic Laguerre Sobolev-type orthogonal
polynomials have been deduced. In the sequel we illustrate the matrix
approach presented in the previous section for the Laguerre case with a
particular example. First, let us denote by $\{\ell _{n}^{\alpha
}(x)\}_{n\geq 0}$, $\{\ell _{n}^{\alpha ,[2]}(x)\}_{n\geq 0}$, $%
\{s_{n}^{M,N}(x)\}_{n\geq 0}$ the sequences of orthonormal polynomials
polynomials with respect to the inner products (\ref{S1-InnProd-mu}), (\ref%
{S1-k-iter-OP}) and (\ref{S1-DicrSob-Inn}), respectively, when $d\mu
(x)=x^{\alpha }e^{-x}dx,\alpha >-1,$ is the classical Laguerre weight
function supported on $(0,+\infty )$.

\smallskip

In order to obtain compact expressions of the matrices, in this section we
will particularize all of those presented in the previous section for the
choice of the parameters $\alpha =0$, $c=-1$, $M=1$, and $N=1$. In these
conditions, using any symbolic algebra package as, for example, Wolfram
Mathematica\copyright, the explicit expressions of the sequences of orthogonal
polynomials appearing in our study.

From section \ref{Sec5-5TRR} we know%
\begin{equation*}
\mathbf{H}=%
\begin{bmatrix}
\frac{5}{2} & \frac{11}{2\sqrt{2}} & \frac{1}{2}\sqrt{\frac{89}{2}} &  &  &
\\
\frac{11}{2\sqrt{2}} & \frac{19}{2} & \frac{129}{\sqrt{89}} & \frac{1}{2}%
\sqrt{\frac{35705}{178}} &  &  \\
\frac{1}{2}\sqrt{\frac{89}{2}} & \frac{129}{\sqrt{89}} & \frac{5331}{178} &
\frac{1503493}{178\sqrt{71410}} & 4\sqrt{\frac{26690173}{3177745}} &  \\
& \frac{1}{2}\sqrt{\frac{35705}{178}} & \frac{1503493}{178\sqrt{71410}} &
\frac{415128273}{6355490} & \frac{72140663342}{35705}\sqrt{\frac{2}{%
2375425397}} & \ddots \\
&  & 4\sqrt{\frac{26690173}{3177745}} & \frac{72140663342}{35705}\sqrt{\frac{%
2}{2375425397}} & \frac{108116532681297}{952972626965} & \ddots \\
&  &  & \ddots & \ddots & \ddots%
\end{bmatrix}%
\end{equation*}%
On the other hand, from (\ref{Prop-Sobpert2}) we obtain%
\begin{equation}
\mathbf{T}=%
\begin{bmatrix}
\sqrt{\frac{5}{2}} &  &  &  &  &  \\
\frac{11}{2\sqrt{5}} & \frac{1}{2}\sqrt{\frac{69}{5}} &  &  &  &  \\
\frac{1}{2}\sqrt{\frac{89}{5}} & \frac{1601}{2\sqrt{30705}} & 4\sqrt{\frac{%
1777}{6141}} &  &  &  \\
& 5\sqrt{\frac{7141}{12282}} & \frac{2911082\sqrt{2}}{\sqrt{389632847685}} &
6\sqrt{\frac{346922}{1714805}} &  &  \\
&  & \sqrt{\frac{1841621937}{63447785}} & \frac{2555758506}{\sqrt{%
89202693674855485}} & 12\sqrt{\frac{4046188065}{52019147177}} &  \\
&  &  & \ddots & \ddots & \ddots%
\end{bmatrix}%
.  \label{matrizT}
\end{equation}%
Notice that if we multiply $\mathbf{T}$ by its transpose then one recovers $%
\mathbf{H}$ according to the statement of Proposition \ref{PROP06}.

The tridiagonal symmetric Jacobi matrix associated with the standard
orthonormal family $\{\ell _{n}^{\alpha ,[2]}(x)\}_{n\geq 0}$ reads%
\begin{equation*}
\mathbf{J}_{[2]}=%
\begin{bmatrix}
\frac{11}{5} & \frac{\sqrt{69}}{5} &  &  &  &  \\
\frac{\sqrt{69}}{5} & \frac{1501}{345} & \frac{2\sqrt{8885}}{69} &  &  &  \\
& \frac{2\sqrt{8885}}{69} & \frac{790903}{122613} & \frac{3\sqrt{4975797}}{%
1777} &  &  \\
&  & \frac{3\sqrt{4975797}}{1777} & \frac{1091564609}{128144801} & \frac{4%
\sqrt{7450856157}}{72113} &  \\
&  &  & \frac{4\sqrt{7450856157}}{72113} & \frac{3195035811691}{302365554333}
& \ddots \\
&  &  &  & \ddots & \ddots%
\end{bmatrix}%
,
\end{equation*}%
and from this expression it is straightforward to check Proposition \ref%
{PROP02}. Next, from the symmetric Jacobi matrix

\begin{equation}
\mathbf{J}=%
\begin{bmatrix}
1 & 1 &  &  &  &  \\
1 & 3 & 2 &  &  &  \\
& 2 & 5 & 3 &  &  \\
&  & 3 & 7 & 4 &  \\
&  &  & 4 & 9 & \ddots \\
&  &  &  & \ddots & \ddots%
\end{bmatrix}
\label{matrizJ}
\end{equation}%
associated with $\{\ell _{n}^{\alpha }(x)\}_{n\geq 0}$, we can implement the
Cholesky factorization of $\mathbf{J-}c\mathbf{I=\mathbf{LL}^{\intercal }}$
in such a way the lower bidiagonal matrix is
\begin{equation*}
\mathbf{L}=%
\begin{bmatrix}
\sqrt{2} &  &  &  &  &  \\
\frac{1}{\sqrt{2}} & \sqrt{\frac{7}{2}} &  &  &  &  \\
& 2\sqrt{\frac{2}{7}} & \sqrt{\frac{34}{7}} &  &  &  \\
&  & 3\sqrt{\frac{7}{34}} & \sqrt{\frac{209}{34}} &  &  \\
&  &  & 4\sqrt{\frac{34}{209}} & \sqrt{\frac{1546}{209}} &  \\
&  &  &  & \ddots & \ddots%
\end{bmatrix}%
.
\end{equation*}%
Following (\ref{Chol01}), we commute the order of $\mathbf{\mathbf{L}}$ and
its transpose to obtain $\mathbf{\mathbf{L}^{\intercal }\mathbf{L}=J}_{[1]}-c%
\mathbf{I}$, where%
\begin{equation*}
\mathbf{J}_{[1]}=%
\begin{bmatrix}
\frac{3}{2} & \frac{\sqrt{7}}{2} &  &  &  &  \\
\frac{\sqrt{7}}{2} & \frac{51}{14} & \frac{4\sqrt{17}}{7} &  &  &  \\
& \frac{4\sqrt{17}}{7} & \frac{1359}{238} & \frac{3\sqrt{1463}}{34} &  &  \\
&  & \frac{3\sqrt{1463}}{34} & \frac{55071}{7106} & \frac{8\sqrt{13141}}{209}
&  \\
&  &  & \frac{8\sqrt{13141}}{209} & \frac{3159027}{323114} & \ddots \\
&  &  &  & \ddots & \ddots%
\end{bmatrix}%
.
\end{equation*}%
The computation of a new Cholesky factorization of $\mathbf{J}_{[1]}-c%
\mathbf{I}$ yields $\mathbf{\mathbf{L}_{1}\mathbf{L}_{1}^{\intercal }}$,
where%
\begin{equation*}
\mathbf{L}_{1}=%
\begin{bmatrix}
\sqrt{\frac{5}{2}} &  &  &  &  &  \\
\sqrt{\frac{7}{10}} & \sqrt{\frac{138}{35}} &  &  &  &  \\
& 2\sqrt{\frac{170}{483}} & \sqrt{\frac{12439}{2346}} &  &  &  \\
&  & 3\sqrt{\frac{14421}{60418}} & \sqrt{\frac{2451842}{371393}} &  &  \\
&  &  & 4\sqrt{\frac{2747242}{15071617}} & \sqrt{\frac{876324669}{111486698}}
&  \\
&  &  &  & \ddots & \ddots%
\end{bmatrix}%
.
\end{equation*}%
Commuting the order of the matrices in the decomposition then we finally
deduce the expression (\ref{segunda}), i. e. $\mathbf{L}_{1}^{\intercal }%
\mathbf{L}_{1}=\mathbf{J}_{[2]}-c\mathbf{I}$. With these last matrices in
mind we find $\mathbf{R}$ and $\mathbf{Q}$ at Proposition \ref{PROP03}. Thus%
\begin{equation}
\mathbf{Q}=\mathbf{LL}_{1}^{-\intercal }=%
\begin{bmatrix}
\frac{2}{\sqrt{5}} & \frac{-7}{\sqrt{345}} & \frac{68}{\sqrt{122613}} &
\frac{-1254}{\sqrt{128144801}} & \frac{12368\sqrt{3}}{\sqrt{100788518111}} &
\cdots \\
\frac{1}{\sqrt{5}} & \frac{14}{\sqrt{345}} & \frac{-136}{\sqrt{122613}} &
\frac{2508}{\sqrt{128144801}} & \frac{-24736\sqrt{3}}{\sqrt{100788518111}} &
\cdots \\
& 2\sqrt{\frac{5}{69}} & \frac{-238}{\sqrt{122613}} & \frac{-4389}{\sqrt{%
128144801}} & \frac{43288\sqrt{3}}{\sqrt{100788518111}} & \cdots \\
&  & 3\sqrt{\frac{69}{1777}} & \frac{7106}{\sqrt{128144801}} & \frac{-210256%
}{\sqrt{302365554333}} & \cdots \\
&  &  & 4\sqrt{\frac{1777}{72113}} & \frac{323114}{\sqrt{302365554333}} &
\cdots \\
&  &  &  & \ddots & \ddots%
\end{bmatrix}
\label{matrizQ}
\end{equation}%
and%
\begin{equation}
\mathbf{R}=\left( \mathbf{LL}_{1}\right) ^{\intercal }=%
\begin{bmatrix}
\sqrt{5} & \frac{6}{\sqrt{5}} & \frac{2}{\sqrt{5}} &  &  &  \\
& \sqrt{\frac{69}{5}} & \frac{88}{\sqrt{345}} & 2\sqrt{\frac{15}{23}} &  &
\\
&  & \sqrt{\frac{1777}{69}} & 790\sqrt{\frac{3}{40871}} & 12\sqrt{\frac{69}{%
1777}} &  \\
&  &  & \sqrt{\frac{72113}{1777}} & \frac{99504}{\sqrt{128144801}} & \ddots
\\
&  &  &  & \sqrt{\frac{4192941}{72113}} & \ddots \\
&  &  &  &  & \ddots%
\end{bmatrix}%
.  \label{matrizR}
\end{equation}%
Observe that $\mathbf{Q}$ is a matrix whose rows are orthogonal vectors, and
multiplying (\ref{matrizQ}) above by its transpose (in this order) we get%
\begin{equation*}
\mathbf{QQ}^{\intercal }\approx
\begin{bmatrix}
0.99657 & 0.0068687 & -0.27601 & 0.019461 & -0.029907 & \cdots \\
0.0068687 & 0.98626 & 0.55201 & -0.038923 & 0.059815 & \cdots \\
-0.27601 & 0.55201 & 0.95793 & -0.73549 & -0.10468 & \cdots \\
0.019461 & -0.038923 & -0.73549 & 0.88972 & 0.16948 & \cdots \\
-0.029907 & 0.059815 & -0.10468 & 0.16948 & 0.73956 & \cdots \\
\vdots & \vdots & \vdots & \vdots & \vdots & \ddots%
\end{bmatrix}%
\approx \mathbf{I}.
\end{equation*}%
Notice that we implement our algorithm with finite matrices. Notwithstanding
the foregoing, multiplying the transpose of (\ref{matrizQ}) by (\ref{matrizQ}%
) we indeed have $\mathbf{Q}^{\intercal }\mathbf{Q}=\mathbf{I}$.

Employing these matrices above it is easy to test numerically expressions $%
\mathbf{H}=\mathbf{T}\left( \mathbf{J}_{[2]}-c\mathbf{I}\right) ^{2}\mathbf{T%
}^{-1}$, $\mathbf{J}-c\mathbf{I}=\mathbf{QR}$, and $\mathbf{J}_{[2]}-c%
\mathbf{I}=\mathbf{RQ}$ according to the statements of Propositions \ref%
{PROP02}, \ref{PROP03} \ref{PROP04} respectively. It is also possible to
check that using the numerical expression (\ref{matrizT}), and alternatively
the expression (\ref{matrizR}), we recover%
\begin{equation*}
\left( \mathbf{J}_{[2]}-c\mathbf{I}\right) ^{2}=
\end{equation*}%
\begin{equation}
\begin{bmatrix}
13 & \frac{118}{\sqrt{69}} & 2\sqrt{\frac{1777}{345}} &  &  &  \\
\frac{118}{\sqrt{69}} & \frac{2681}{69} & \frac{227476}{69\sqrt{8885}} & 2%
\sqrt{\frac{1081695}{40871}} &  &  \\
2\sqrt{\frac{1777}{345}} & \frac{227476}{69\sqrt{8885}} & \frac{9460213}{%
122613} & \frac{84432374\sqrt{\frac{3}{1658599}}}{1777} & 36\sqrt{\frac{%
32145881}{128144801}} &  \\
& 2\sqrt{\frac{1081695}{40871}} & \frac{84432374\sqrt{\frac{3}{1658599}}}{%
1777} & \frac{16364422385}{128144801} & \frac{628405520264}{72113\sqrt{%
7450856157}} & \ddots  \\
&  & 36\sqrt{\frac{32145881}{128144801}} & \frac{628405520264}{72113\sqrt{%
7450856157}} & \frac{57572534044081}{302365554333} & \ddots  \\
&  &  & \ddots  & \ddots  & \ddots
\end{bmatrix}%
,  \label{matrizJ2sq}
\end{equation}%
according to Proposition \ref{PROP07}.

Finally, Proposition \ref{PROP05} can be numerically tested from (\ref%
{matrizJ}), (\ref{matrizR}) and (\ref{matrizJ2sq}).



\section*{Acknowledgments}



The work of CH, EJH and AL is supported by Direcci\'{o}n General de Investigaci\'{o}%
n e Innovaci\'{o}n, Consejer\'{i}a de Educaci\'{o}n e Investigaci\'{o}n of the Comunidad de
Madrid (Spain), and Universidad de Alcal\'{a} under grants CM/JIN/2019-010 and
CM/JIN/2021-014, \textit{Proyectos de I+D para J\'{o}venes Investigadores de la
Universidad de Alcal\'{a} 2019 and 2021, respectively}. The work of FM has been
supported by FEDER/Ministerio de Ciencia e Innovaci\'{o}n-Agencia Estatal de
Investigaci\'{o}n of Spain, grant PGC2018-096504-B-C33, and the Madrid
Government (Comunidad de Madrid-Spain) under the Multiannual Agreement with
UC3M in the line of Excellence of University Professors, grant EPUC3M23 in
the context of the V PRICIT (Regional Programme of Research and
Technological Innovation).





\begin{thebibliography}{99}
\bibitem{AMRR92} M. Alfaro, F. Marcell\'{a}n, M. L. Rezola, A. Ronveaux, \textit{On orthogonal polynomials of Sobolev type: algebraic properties and zeros}. SIAM J. Math. Anal. \textbf{23} (3) (1992), 737--757.

\bibitem{MBFM04} M. I. Bueno, F. Marcell\'{a}n, \textit{Darboux transformation and perturbation of linear functionals}, Linear Algebra Appl. \textbf{384} (2004), 215--242.

\bibitem{BM-LAA06} M. I. Bueno,  F. Marcell\'{a}n, \textit{Polynomial perturbations of bilinear functionals and Hessenberg matrices,} Linear Algebra Appl. \textbf{414} (2006), 64--83.

\bibitem{BI-JCAM92} M. D. Buhmann, A. Iserles, \textit{On orthogonal polynomials transformed by the QR algorithm}, J. Comput. Appl. Math. \textbf{43} (1-2) (1992), 117--134.

\bibitem{Chi78} T. S. Chihara, \textit{An Introduction to Orthogonal Polynomials}. Mathematics and its Applications Series, Gordon and Breach,
New York, 1978.

\bibitem{DM-NA14} M. Derevyagin, F. Marcell\'{a}n, \textit{A note on the Geronimus transformation and Sobolev orthogonal polynomials}, Numer.
Algorithms \textbf{67} (2) (2014), 271--287.

\bibitem{DGM-LAA14} M. Derevyagin, J. C. Garc\'{i}a-Ardila, F. Marcell\'{a}n, \textit{Multiple Geronimus transformations}, Linear Algebra Appl. \textbf{454} (2014), 158--183.

\bibitem{D93} A. J. Dur\'{a}n, \textit{A generalization of Favard's theorem for polynomials satisfying a recurrence relation}. J. Approx. Theory \textbf{74} (1) (1993), 83--109.

\bibitem{DW95} A. J. Dur\'{a}n, W. Van Assche, \textit{Orthogonal matrix polynomials and higher-order recurrence relations}. Linear Algebra Appl. \textbf{219} (1995), 261--280.

\bibitem{ELMMR95} W. D. Evans, L. L. Littlejohn, F. Marcell\'{a}n, C. Markett, A. Ronveaux, \textit{On recurrence relations for Sobolev orthogonal polynomials}. SIAM J. Math. Anal. \textbf{26} (2) (1995), 446--467.

\bibitem{Ardila2022} J. C. Garc\'{i}a-Ardila, F. Marcell\'{a}n, P. H. Villamil-Hern\'{a}ndez, \textit{Associated orthogonal polynomials of the first kind and Darboux transformations.} J. Math. Anal. Appl. \textbf{508} (2022) 125883, 26 pp.

\bibitem{Ga02} W. Gautschi, \textit{The interplay between classical analysis and (Numerical) Linear Algebra- A tribute to Gene Golub}, ETNA. \textbf{13} (2002), 119--147.

\bibitem{Ga04} W. Gautschi, \textit{Orthogonal polynomials: computation and approximation}. Numerical Mathematics and Scientific Computation. Oxford University Press, Oxford, 2004.

\bibitem{HPMQ-subm13} E. J. Huertas, F. Marcell\'{a}n, M. F. P\'{e}rez-Valero, Y. Quintana, \textit{Asymptotics for Laguerre-Sobolev type orthogonal polynomials modified within their oscillatory regime}, Appl. Math. Comput. \textbf{236} (2014), 260--272.

\bibitem{Thesis-H12} E. J. Huertas, \textit{Analytic properties of Krall-type and Sobolev-type orthogonal polynomials} (Doctoral Dissertation), Universidad Carlos III de Madrid, 2012.

\bibitem{KG83} J. Kautsk\'{y}, G. H. Golub, \textit{On the calculation of Jacobi matrices}. Linear Algebra Appl. \textbf{52/53} (1983), 439--455.

\bibitem{K90} R. Koekoek, \textit{Generalizations of Laguerre polynomials}. J. Math. Anal. Appl. \textbf{153} (2) (1990), 576--590.

\bibitem{KKB98} J. Koekoek, R. Koekoek, H. Bavinck, \textit{On differential equations for Sobolev-type Laguerre polynomials}. Trans. Amer. Math. Soc. \textbf{350} (1) (1998), 347--393.

\bibitem{KM93} R. Koekoek, H. G. Meijer, \textit{A generalization of Laguerre polynomials}. SIAM J. Math. Anal. \textbf{24} (3) (1993), 768--782.

\bibitem{LMV95} G. L\'{o}pez, F. Marcell\'{a}n, W. Van Assche, \textit{Relative asymptotics for polynomials orthogonal with respect to a discrete Sobolev inner product}. Constr. Approx. \textbf{11} (1) (1995), 107--137.

\bibitem{MM06} F. Marcell\'{a}n, J. J. Moreno Balc\'{a}zar, \textit{Asymptotics and zeros of Sobolev orthogonal polynomials on unbounded supports.} Acta Appl. Math. \textbf{94} (2) (2006), 163--192.

\bibitem{MV93} F. Marcell\'{a}n, W. Van Assche, \textit{Relative asymptotics for orthogonal polynomials with a Sobolev inner product}. J. Approx. Theory \textbf{72} (2) (1993), 193--209.

\bibitem{MX15} F. Marcell\'{a}n, Y. Xu, \textit{On Sobolev orthogonal polynomials}. Expo. Math. \textbf{33} (3) (2015), 308--352.

\bibitem{MaZeFeHu11} F. Marcell\'{a}n, R. Xh. Zejnullahu, B. Xh. Fejzullahu, E. J. Huertas \textit{On orthogonal polynomials with respect to certain discrete Sobolev inner product}, Pacific J. Math. \textbf{257} (1) (2012), 167--188.

\bibitem{CLE19} C. Markett, \textit{On the differential equation for the Laguerre-Sobolev polynomials}. J. Approx. Theory \textbf{247} (2019), 48--67.

\bibitem{CLE21} C. Markett, \textit{Symmetric differential operators for Sobolev orthogonal polynomials of Laguerre- and Jacobi-type.} Integral Transforms Spec. Funct. \textbf{32} (5-8) (2021), 568--587.

\bibitem{Szego75} G. Szeg\H{o}, \textit{Orthogonal Polynomials}, $4^{th}$ ed., Amer. Math. Soc. Colloq. Publ. Series, vol \textbf{23}, Amer. Math. Soc. Providence, RI, 1975.

\bibitem{Yoon02} G. J. Yoon, \textit{Darboux transforms and orthogonal polynomials}, Bull. Korean Math. Soc. \textbf{39} (2002), 359--376.

\bibitem{Zhe97} A. Zhedanov, \textit{Rational spectral transformations and orthogonal polynomials}. J. Comput. Appl. Math. \textbf{85} (1997), 67--83.

\end{thebibliography}
\end{document}